\theoremstyle{plain}
\newtheorem{Thm}{Theorem}[section]
\newtheorem{Lemma}[Thm]{Lemma}
\newtheorem*{Cor*}{Corollary}
\newtheorem*{definition*}{Definition}
\newtheorem*{Thm*}{Theorem}
\theoremstyle{remark}
\newtheorem{Rmk}[Thm]{\bf{Remark}}
\journal{Journal of Differential Equations}
\begin{document}
\begin{frontmatter}
\title{
A New Condition for the Concavity Method of Blow-up Solutions to p-Laplacian Parabolic Equations
}

\author[syc]{Soon-Yeong Chung\corref{cjhp}}
\address[syc]{Department of Mathematics and Program of Integrated Biotechnology, Sogang University, Seoul 04107, Korea}
\ead{sychung@sogang.ac.kr}

\author[mjc]{Min-Jun Choi}
\address[mjc]{Department of Mathematics, Sogang University, Seoul 04107, Korea}
\cortext[cjhp]{Corresponding author}
\ead{dudrka2000@sogang.ac.kr}

\begin{abstract}

In this paper, we consider an initial-boundary value problem of the p-Laplacian parabolic equations
\begin{equation*}
	\begin{cases}
		u_{t}\left(x,t\right)=\mbox{div}(|\nabla u\left(x,t\right)|^{p-2}\nabla u(x,t))+f(u(x,t)), & \left(x,t\right)\in \Omega\times\left(0,+\infty\right),\\
		u\left(x,t\right)=0, & \left(x,t\right)\in\partial \Omega\times\left[0,+\infty\right),\\
		u\left(x,0\right)=u_{0}\geq0, & x\in\overline{\Omega},
	\end{cases}
\end{equation*}
where $p\geq2$ and $\Omega$ is a bounded domain of $\mathbb{R}^{N}$ $(N\geq1)$ with smooth boundary $\partial\Omega$. The main contribution of this work is to introduce a new condition
\begin{center}
	$(C_{p})$$\hspace{1cm} \alpha \int_{0}^{u}f(s)ds \leq uf(u)+\beta u^{p}+\gamma,\,\,u>0$
\end{center}
for some $\alpha, \beta, \gamma>0$ with $0<\beta\leq\frac{\left(\alpha-p\right)\lambda_{1, p}}{p}$, where $\lambda_{1, p}$ is the first eigenvalue of p-Laplacian $\Delta_{p}$, and we use the concavity method to obtain the blow-up solutions to the above equations. In fact, it will be seen that the condition $(C_{p})$ improves the conditions ever known so far.
\end{abstract}

\begin{keyword}
Parabolic, p-Laplacian, Blow-up, Concavity method.
\MSC [2010]  35K92 \sep 35B44
\end{keyword}
\end{frontmatter}
\section{Introduction}\label{introduction}

In this paper, we discuss the blow-up solutions for the following p-Laplacian parabolic equations
\begin{equation}\label{equation}
\begin{cases}
u_{t}\left(x,t\right)=\mbox{div}(|\nabla u\left(x,t\right)|^{p-2}\nabla u(x,t))+f\left(u\left(x,t\right)\right), & \left(x,t\right)\in \Omega\times\left(0,+\infty\right),\\
u\left(x,t\right)=0, & \left(x,t\right)\in\partial \Omega\times\left[0,+\infty\right),\\
u\left(x,0\right)=u_{0}\left(x\right)\geq0, & x\in\overline{\Omega},
\end{cases}
\end{equation}
where $p\geq2$, $\Omega$ is a bounded domain in $\mathbb{R}^{N}(N\geq1)$ with smooth boundary $\partial \Omega$ and $f$ is locally Lipschitz continuous on $\mathbb{R}$, $f(0)=0$ and $f(u)>0$ for $u>0$. Moreover, the initial data $u_{0}$ is assumed to be a non-negative and non-trivial function in $C^{1}(\overline{\Omega})$ with $u_{0}(x)=0$ on $\partial \Omega$ for $p=2$ and in $L^{\infty}(\Omega)\cap W_{0}^{1,p}(\Omega)$ for $p>2$, respectively.

There are many literatures dealing with a local existence of classical solutions (or weak solutions)  to the equations \eqref{equation}. In general, it is well known that not all solutions of the equations \eqref{equation} exist for all time. So, many authors have focused on the sufficient conditions for the local existence of solutions to the equations \eqref{equation}. In particular, for $p=2$, Ball \cite{B} derived sufficient conditions for the local existence solutions to the equations \eqref{equation}. On the other hand, for $p>2$, Zhao \cite{Z} also derived sufficient conditions for the nonexistence of global solutions to the equations \eqref{equation}.

On the other hand, the blow-up solutions to the equations \eqref{equation} have been studied by many authors. In particular, Levine and Payne \cite{L} studied the abstract equation
\[
\begin{cases}
P\frac{du}{dt}=-A(t)u + f(u(t)), & t\in [0, +\infty),\\
u(0)=u_{0},
\end{cases}
\]
where $P$ and $A$ are positive linear operators defined on a dense subdomain $D$ of a real or complex Hilbert Space, in which they obtained the blow-up solutions, under abstract conditions\\
\begin{equation}\label{abstract (C)}
2(\alpha + 1)F(x)\leq(x, f(x)),\,\,\,F(u_{0}(x))>\frac{1}{2}(u_{0}(x),A u_{0}(x))
\end{equation}
for every $x\in D$, where $F(x)=\int_{0}^{1}(f(\rho x), x)d\rho$. This work has been recognized as a creative and elegant tool for giving criteria for the blow-up, which is called ``the concavity method''. They also applied the method to some other equations or system of equations (See \cite{LP, LPa}).

Afterwards, the method in the abstract form was changed into a concrete form by Philippin and Proytcheva \cite{PP} and applied to the same equation as \eqref{equation} with $p=2$. In fact, the condition \eqref{abstract (C)} was changed into the form
\[
\begin{aligned}
\hbox{$(A)$ \hspace{1cm} $(2+\epsilon)F(u)\leq uf(u),\,\,u>0$},
\end{aligned}
\]
for some $\epsilon>0$ and the initial data $u_{0}$ satisfies
\begin{equation}\label{initial data 1}
-\frac{1}{2}\int_{\Omega}|\nabla u_{0}(x)|^{2}dx + \int_{\Omega}F(u_{0}(x))dx>0,
\end{equation}
where $F(u)=\int_{0}^{u}f(s)ds$.

Since then, the concavity method has been used so far to derive the blow-up solutions the variants of the equations \eqref{equation} or some other equations.

For example, Ding and Hu \cite{DH} adopted the condition (A) and
\[
-\int_{\Omega}\int_{0}^{|\nabla u_{0}|^{2}}\rho(y)dydx + 2k(0)\int_{\Omega}\int_{0}^{u_{0}}f(s)dsdx>0
\]

     to get blow-up solutions to the equation
\[
(g(u))_{t}=\nabla\cdot(\rho(|\nabla u|^{2})\nabla u) + k(t)f(u),
\]

assuming more that $k(0)>0$, $k'(t)\geq0$, $\lim_{s\rightarrow0^{+}} s^{2}g'(s)=0$, $g''(s)\leq0$, and
\[
0<s\rho(s)\leq(1+\alpha)\int_{0}^{s}\rho(y)dy.
\]

Another example is the work by Payne et al. in \cite{PPP, PS} in which they obtained the blow-up solutions to the equations
\begin{equation}\label{IBBequation}
\begin{cases}
u_{t}=\Delta u - g(u), & \hbox{in}\,\,\Omega\times(0,+\infty),\\
\frac{\partial u}{\partial n}=f(u), & \hbox{on}\,\, \partial \Omega\times(0,+\infty),\\
u(x,0)=u_{0}(x)\geq0,
\end{cases}
\end{equation}
when the Neumann boundary data $f$ satisfies the condition (A).

On the other hands, the condition $(A)$ for the nonlinear term $f$ and the condition \eqref{initial data 1} for the initial data $u_{0}$ were relaxed by Bandle and Brunner \cite{BB} as follows:

\[
\begin{aligned}
\hbox{(B) \hspace{1cm} $(2+\epsilon)F(u)\leq uf(u) + \gamma ,\,\,u>0$}
\end{aligned}
\]
for some $\epsilon>0$ and the initial data $u_{0}$ satisfying
\begin{equation}\label{initial data 2}
-\frac{1}{2}\int_{\Omega}|\nabla u_{0}(x)|^{2}dx+\int_{\Omega}[F(x,u_{0})-\gamma]dx>0,
\end{equation}


Concerning the general case $p>2$, in 1993, Zhao \cite{Z} studied the following equations
\begin{equation}\label{Zhao equation}
\begin{cases}
u_{t}=\mbox{div}(|\nabla u|^{p-2}\nabla u)+f(u), & \hbox{ in } \Omega\times\left(0,T\right),\\
u\left(x,t\right)=0, & \hbox{ on }\partial \Omega,\\
u\left(x,0\right)=u_{0}, & \hbox{ in } \Omega
\end{cases}
\end{equation}
and proved the blow-up solutions to the equations \eqref{Zhao equation} under the condition
\begin{equation}\label{pf<uf}
p\,F(u)\leq uf(u),\,\,u>0,
\end{equation}
for some $\epsilon>0$ and  the initial data $u_{0}$ satisfying
\begin{equation}\label{initial data 3}
-\frac{1}{p}\int_{\Omega}|\nabla u_{0}(x)|^{p}dx+\int_{\Omega}F(u_{0}(x))dx\geq  \frac{4(p-1)}{T(p-2)^{2}p}\int_{\Omega}u_{0}^{2}(x)dx
\end{equation}

In $2002$,  Messaoudi \cite{Me} proved that the solutions to the same equations \eqref{equation} blow up under the condition

\[
\begin{aligned}
\hbox{$(A_{p})$ \hspace{1cm} $(p+\epsilon)F(u)\leq uf(u),\,\,u>0$},
\end{aligned}
\]
and the initial data $u_{0}$ satisfying

\[
-\frac{1}{p}\int_{\Omega}|\nabla u_{0}(x)|^{p}dx + \int_{\Omega}F(u_{0}(x))dx>0.
\]


Looking into the above conditions $(A)$, $(A_{p})$, \eqref{pf<uf}, $(B)$ and so on more closely, we can see that they are independent of the eigenvalue of the Laplacian $\Delta$ (or p-Laplacian $\Delta_{p}$), which is a constant depending on the domain $\Omega$. From this point of view, there is, we think, a possibility that the above conditions can be improved and  refined in a way, depending on the domain and the eigenvalue.


Being motivated by this point, we introduce a new condition as follows: for some $\alpha, \beta, \gamma>0$,
\[
\hbox{$(C_{p})$\hspace{1cm}  $\alpha \int_{0}^{u}f\left(s\right)ds\leq uf(u) + \beta u^{p} + \gamma,\,\,u>0$},
\]
where $0<\beta\leq\frac{\left(\alpha-p\right)\lambda_{1, p}}{p}$ and $\lambda_{1, p}$ is the first eigenvalue of the Laplacian $\Delta_{p}$. (For simplicity, the condition $(C_{p})$ and $\lambda_{1,p}$ are denoted by $(C)$ when $p=2$, respectively.)\\

The main theorems of this paper are as follows:
\begin{Thm*}[Case 1 : $p=2$]\label{cBlowB}
	Let a function $f$ satisfy the condition $(C)$. If the initial data $u_{0}\in C^{1}(\overline{\Omega})$ with $u_{0}=0$ on $\partial \Omega$ satisfies
	\begin{equation}\label{11}
	-\frac{1}{2}\int_{\Omega}\left|\nabla u_{0}\left(x\right)\right|^{2}dx+\int_{\Omega}\left[\int_{0}^{u_{0}\left(x\right)}f(s)ds-\gamma\right]dx>0,
	\end{equation}
	then the nonnegative classical solutions $u$ to the equations \eqref{equation} blows up at finite time $T^{*}$, in the sense of
	\[
	\lim_{t\rightarrow T^{*}}\int_{0}^{t}\int_{\Omega}u^{2}\left(x,s\right)ds=+\infty,
	\]
	where $\gamma$ is the constant in the condition $(C)$.
\end{Thm*}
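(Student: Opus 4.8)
The plan is to use the classical concavity method of Levine in the form adapted by Philippin--Proytcheva. Define the auxiliary functional
\[
\mathcal{A}(t) = \int_{0}^{t}\int_{\Omega} u^{2}(x,s)\,dx\,ds + M,
\]
for a constant $M>0$ to be chosen later, and introduce the energy
\[
E(t) = \frac{1}{2}\int_{\Omega}|\nabla u(x,t)|^{2}\,dx - \int_{\Omega}\Bigl[\int_{0}^{u(x,t)}f(s)\,ds - \gamma\Bigr]dx.
\]
The hypothesis \eqref{11} says precisely that $E(0)<0$. The first step is to show that $E(t)$ is non-increasing along the flow: differentiating and using the equation gives $E'(t) = -\int_{\Omega} u_{t}^{2}\,dx \le 0$, so $E(t)\le E(0)<0$ for all $t$ in the interval of existence.

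Next I would compute $\mathcal{A}'(t) = \int_{\Omega} u^{2}\,dx + \text{(something)}$ — actually $\mathcal{A}'(t)=\int_0^t\frac{d}{ds}\int_\Omega u^2\,dx\,ds + \int_\Omega u_0^2\,dx = \int_\Omega u^2(x,t)\,dx$ after noting $\int_0^t\int_\Omega 2uu_s\,dx\,ds = \int_\Omega u^2(x,t)\,dx - \int_\Omega u_0^2\,dx$; more directly $\mathcal{A}'(t)=\int_\Omega u^2(x,t)\,dx$. Then
\[
\mathcal{A}''(t) = 2\int_{\Omega} u\, u_{t}\,dx = 2\int_{\Omega} u\,\bigl[\mathrm{div}(|\nabla u|^{p-2}\nabla u) + f(u)\bigr]dx = -2\int_{\Omega}|\nabla u|^{2}\,dx + 2\int_{\Omega} u f(u)\,dx
\]
for $p=2$. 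The crux is to bound $\mathcal{A}''(t)$ below by a multiple of $-E(t)$ (hence a positive quantity) plus a controllable remainder. Using the condition $(C)$ in the form $uf(u) \ge \alpha\int_0^u f(s)\,ds - \beta u^2 - \gamma|\Omega|$ integrated over $\Omega$, together with $E(t)\le E(0)$, one gets
\[
\mathcal{A}''(t) \ge (\alpha - 2)\int_{\Omega}|\nabla u|^{2}\,dx - 2\beta\int_{\Omega} u^{2}\,dx - 2\alpha E(0) + (\text{terms involving } \gamma|\Omega|),
\]
and here the Poincaré-type inequality $\lambda_{1,2}\int_\Omega u^2\,dx \le \int_\Omega|\nabla u|^2\,dx$ together with the structural restriction $\beta \le \frac{(\alpha-2)\lambda_{1,2}}{2}$ makes the first two terms combine to something non-negative, so $\mathcal{A}''(t) \ge -2\alpha E(0) > 0$ modulo constants. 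This is the step where the novelty of the condition $(C_p)$ is used, and it is the main obstacle: one must juggle the constants $\alpha,\beta,\gamma$ and the eigenvalue so that the coefficient of $\int_\Omega u^2$ does not spoil the sign, and absorb the $\gamma$ contributions into the choice of $M$.

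Finally, I would establish the concavity inequality $\mathcal{A}\mathcal{A}'' - (1+\delta)(\mathcal{A}')^{2} \ge 0$ for some $\delta>0$. By Cauchy--Schwarz, $(\mathcal{A}'(t))^2 = \bigl(\int_\Omega u^2\,dx\bigr)^2$ relates to $\int_0^t\int_\Omega u^2 \cdot \int_\Omega u u_t$ via the identity $\int_\Omega u^2(x,t)\,dx = \int_\Omega u_0^2\,dx + 2\int_0^t\int_\Omega u u_s\,dx\,ds$ and the Schwarz inequality $\bigl(\int_0^t\int_\Omega u u_s\bigr)^2 \le \bigl(\int_0^t\int_\Omega u^2\bigr)\bigl(\int_0^t\int_\Omega u_s^2\bigr)$; the term $\int_0^t\int_\Omega u_s^2 = -(E(t)-E(0)) \le -E(0)$ wait, $= E(0)-E(t)$, and since $E(t)<0$ this is bounded by $|E(0)| + |E(t)|$, which must in turn be controlled — actually one uses $\mathcal{A}''(t) \ge 4(\delta+1)\int_0^t\int_\Omega u_s^2\,dx\,ds$ after the estimates above, so that multiplying by $\mathcal{A}$ and using Schwarz yields $\mathcal{A}\mathcal{A}'' \ge (1+\delta)(\mathcal{A}')^2$. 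Then $(\mathcal{A}^{-\delta})'' \le 0$, so $\mathcal{A}^{-\delta}$ is concave and positive, hence hits zero in finite time $T^{*} \le \frac{\mathcal{A}(0)}{\delta\mathcal{A}'(0)}$, forcing $\mathcal{A}(t)\to+\infty$; choosing $M$ small (and using that $u_0$ is nontrivial so $\mathcal{A}'(0)=\int_\Omega u_0^2\,dx>0$) gives $\int_0^{t}\int_\Omega u^2\,dx\,ds \to +\infty$ as $t\to T^*$. The delicate point throughout is the choice of $\delta$ and $M$ so that all inequalities hold simultaneously, and checking that the sign condition $\beta\le\frac{(\alpha-2)\lambda_{1,2}}{2}$ is exactly what is needed to close the argument.
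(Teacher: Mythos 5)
Your proposal is correct and follows essentially the same route as the paper: the same auxiliary functional $\int_0^t\int_\Omega u^2\,dx\,ds+M$, the energy $E(t)=-J(t)$ with $E'=-\int_\Omega u_t^2\,dx$, condition $(C)$ combined with the Poincar\'e inequality and $\beta\le\tfrac{(\alpha-2)\lambda_1}{2}$ to get $\mathcal{A}''\ge 2\alpha\bigl[-E(0)+\int_0^t\int_\Omega u_s^2\,dx\,ds\bigr]$, then Schwarz with $\sigma=\delta=\sqrt{\alpha/2}-1$ and the standard concavity conclusion. One small slip: at the end $M$ must be chosen \emph{large} (so that $2\alpha M\,(-E(0))$ dominates $(1+\sigma)(1+\tfrac{1}{\delta})\bigl[\int_\Omega u_0^2\,dx\bigr]^2$), not small; this does not affect the blow-up conclusion.
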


\begin{Thm*}[Case 2 : $p>2$]\label{cBlowBp}
	Let a function $f$ satisfy the condition $(C_{p})$ and $p>2$. If the initial data $u_{0}\in L^{\infty}(\Omega)\cap W_{0}^{1,p}(\Omega)$ satisfies
	\begin{equation}\label{J_{p}(0)>0}
	-\frac{1}{p}\int_{\Omega}\left|\nabla u_{0}\left(x\right)\right|^{p}dx+\int_{\Omega}\left[F(u_{0}(x))-\gamma\right]dx>0,
	\end{equation}
	then the nonnegative weak solutions $u$ to the equations \eqref{equation} blows up at finite time $T^{*}$, in the sense of
	\[
	\lim_{t\rightarrow T^{*}}\int_{0}^{t}\int_{\Omega}u^{2}\left(x,s\right)ds=+\infty,
	\]
	where $\gamma$ is the constant in the condition $(C_{p})$.
\end{Thm*}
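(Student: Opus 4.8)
The plan is to use the classical concavity argument of Levine, adapted to the $p$-Laplacian in the spirit of Zhao and Messaoudi, but exploiting the new condition $(C_p)$ together with the Rayleigh quotient characterization of $\lambda_{1,p}$ to close the differential inequality. Define the auxiliary functional
\[
\Phi(t) = \int_{0}^{t}\!\!\int_{\Omega} u^{2}(x,s)\,dx\,ds + (T_1 - t)\!\int_{\Omega} u_0^{2}(x)\,dx + b\,(t+\tau)^{2}
\]
for suitable positive constants $b,\tau$ and an arbitrary $T_1>0$ to be fixed later, and show that $\Phi$ satisfies a concavity inequality $\Phi\Phi'' - (1+\sigma)(\Phi')^{2}\ge 0$ for some $\sigma>0$. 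Since $\Phi>0$ and, by the hypothesis \eqref{J_{p}(0)>0}, $\Phi'$ is eventually positive, this forces $\Phi^{-\sigma}$ to be concave and hit zero in finite time, i.e.\ $\Phi\to\infty$, which yields the claimed blow-up of $\int_0^t\!\int_\Omega u^2$.

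The key computational steps are as follows. First I would compute $\Phi'(t)$ and $\Phi''(t)$; using the weak formulation of \eqref{equation} with test function $u$, one gets $\frac{d}{dt}\int_\Omega u^2 = -2\int_\Omega|\nabla u|^p + 2\int_\Omega u f(u)$, so $\Phi''(t) = -4\int_\Omega |\nabla u|^p\,dx + 4\int_\Omega u f(u)\,dx + 2b$. Next, introduce the energy functional $J(t) = \frac1p\int_\Omega|\nabla u|^p\,dx - \int_\Omega F(u)\,dx$; differentiating and using the equation gives $J'(t) = -\int_\Omega u_t^2\,dx \le 0$, so $J(t)\le J(0) < -\gamma|\Omega| \le 0$ (this last inequality is exactly the hypothesis \eqref{J_{p}(0)>0}, after moving the $\gamma$-term). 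Then I would rewrite
\[
-\int_\Omega|\nabla u|^p + \int_\Omega u f(u) = \alpha\!\int_\Omega F(u) - \int_\Omega|\nabla u|^p + \Big(\!\int_\Omega u f(u) - \alpha\!\int_\Omega F(u)\Big),
\]
bound the parenthesized term below by $-\beta\int_\Omega u^p - \gamma|\Omega|$ via $(C_p)$, replace $\int_\Omega F(u)$ using $\int_\Omega F(u) = \frac1p\int_\Omega|\nabla u|^p - J(t)$, and finally control $\beta\int_\Omega u^p$ by $\frac{\beta}{\lambda_{1,p}}\int_\Omega|\nabla u|^p$ using the variational inequality $\lambda_{1,p}\int_\Omega u^p \le \int_\Omega|\nabla u|^p$. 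The constraint $0<\beta\le \frac{(\alpha-p)\lambda_{1,p}}{p}$ is precisely what makes the coefficient of $\int_\Omega|\nabla u|^p$ come out with the right (nonnegative) sign so that the gradient terms can be discarded, leaving $\Phi''(t) \ge 4(\tfrac{\alpha}{p}-1)(-pJ(t)) \ge$ a positive multiple of $\int_0^t\!\int_\Omega u^2$-type quantities plus lower-order terms — here one chooses $b$ small enough relative to $-J(0)-\gamma|\Omega|$.

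With $\Phi''$ bounded below by a multiple of $\int_\Omega u^2 + b(t+\tau)^2$-type expressions, I would then invoke the Cauchy–Schwarz inequality in the form
\[
\Big(\!\int_0^t\!\!\int_\Omega u u_s + b(s+\tau)\Big)^2 \le \Big(\!\int_0^t\!\!\int_\Omega u^2 + b(s+\tau)^2\Big)\Big(\!\int_0^t\!\!\int_\Omega u_s^2 + b\Big),
\]
together with the identity $\Phi'(t) = 2\int_0^t\!\int_\Omega u u_s\,dx\,ds + 2b(t+\tau)$ (after the $(T_1-t)$-term cancellation), to obtain $\Phi\Phi'' - (1+\sigma)(\Phi')^2 \ge \Phi\big(\Phi'' - 4(1+\sigma)\eta(t)\big)$ where $\eta(t) = \int_0^t\!\int_\Omega u_s^2 + b$, and then absorb $4(1+\sigma)\eta(t)$ into the lower bound for $\Phi''$ by choosing $\sigma = \tfrac{\alpha-p}{2p}$ (or similar) so that the remaining coefficient of $\int_\Omega u_s^2$ — which appears with the opposite sign through $-J(t)$ after using $J'=-\int_\Omega u_t^2$ — is nonnegative. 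Finally, fixing $T_1$ large enough that $\Phi'(0)>0$ (possible since $\Phi'(0) = 2b\tau$, or by the standard trick of first showing $\Phi'$ increases) and applying the concavity lemma gives $T^* \le \frac{\Phi(0)}{\sigma \Phi'(0)}$, completing the proof.

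The main obstacle I anticipate is the bookkeeping in balancing the two free constants $\sigma$ and $b$ (and $\tau$): the term $\beta\int_\Omega u^p$ must be absorbed into $\int_\Omega|\nabla u|^p$ with room to spare so that a strictly positive fraction of $-J(t)$ survives, and simultaneously the $\int_\Omega u_s^2$ terms generated by Cauchy–Schwarz must be dominated by the $-J(t) = \int_0^t\int_\Omega u_s^2 + J(0)$-type growth without overshooting; making all inequalities consistent for a single choice of $\sigma>0$ is where the hypothesis $0<\beta\le\frac{(\alpha-p)\lambda_{1,p}}{p}$ (and hence $\alpha>p$) is used in an essential, quantitatively tight way. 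A secondary technical point, only for $p>2$, is that $u$ is merely a weak solution, so all the differentiations of $\Phi$, $J$ and the energy identity must be justified by an approximation/regularization argument rather than done pointwise in $t$.
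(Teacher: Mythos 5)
Your proposal is correct and follows essentially the same route as the paper: the concavity method applied to (a perturbation of) $\int_0^t\int_\Omega u^2$, with the energy functional $J_p$, the condition $(C_p)$, and the Rayleigh-quotient characterization $\lambda_{1,p}\int_\Omega u^p\le\int_\Omega|\nabla u|^p$ used exactly as you describe to discard the $\int_\Omega|\nabla u|^p$ and $\beta\int_\Omega u^p$ terms and reduce to $\Phi''\gtrsim -J(t)\ge -J(0)+\int_0^t\int_\Omega u_t^2$. The only differences are cosmetic: the paper takes the simpler auxiliary function $I_p(t)=\int_0^t\int_\Omega u^2\,dx\,ds+M$ with $M$ large (choosing $\sigma=\sqrt{\alpha/2}-1$) instead of your Levine-style $(T_1-t)$ and $b(t+\tau)^2$ corrections, and it imports the energy inequality $J_p(t)\ge J_p(0)+\int_0^t\int_\Omega u_t^2$ for weak solutions directly from Zhao's lemma, which disposes of the regularization issue you flag at the end.
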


We organized this paper as follows: In Section \ref{section p=2}, we discuss, when $p=2$,  the blow-up classical solutions using concavity method with the condition $(C)$ and in Section \ref{BCMp}, when $p>2$, we discuss the blow-up weak solutions using the same method with the condition $(C_{p})$, which is the general case. Finally, in Section \ref{section 3 conditions}, we discuss the condition $(C_{p})$, comparing with the conditions $(A_{p})$, $(B)$, and \eqref{pf<uf} so on, together with the condition $J_{p}(0)>0$ for the initial data.\\


\section{Case 1 : $p=2$ and the classical solutions}\label{section p=2}
%

The local existence of the classical solutions to the equations \eqref{equation}  with the case $p=2$ is well known (See Ball \cite{B}). So, accepting the local existence,  we focus ourselves on the discussion of the blow-up of the classical solutions to the equation \eqref{equation} with $p=2$.\\


The following lemma is going to be useful in the proof of Theorem \ref{BlowB}.
\begin{Lemma}[\cite{KP, P}]\label{eigenvalue}
	There exist $\lambda_{1}>0$ and $\phi_{1}\in H_{0}^{1}(\Omega)$ with $\phi_{1}>0$ in $\Omega$ such that
	\[
	\begin{cases}
	-\Delta\phi_{1}\left(x\right)=\lambda_{1}\phi_{1}\left(x\right), & x\in \Omega,\\
	\phi_{1}\left(x\right)=0, & x\in\partial \Omega.\\
	\end{cases}
	\]
	Moreover, $\lambda_{1}$ is given by
	\[
	\lambda_{1} = \inf_{w\in H_{0}^{1}(\Omega)}\frac{\int_{\Omega}|\nabla w|^{2}dx}{\int_{\Omega}|w|^{2}dx}>0.
	\]
	In the above, we recall that the number $\lambda_{1}$ is the first eigenvalue of $\Delta$ and $\phi_{1}$ is a corresponding eigenfunction.
\end{Lemma}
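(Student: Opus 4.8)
The statement is a classical fact about the Dirichlet $p$-Laplacian (here the linear case $p=2$), so the plan is simply to recall the standard direct-method argument. First I would define
\[
\lambda_{1}:=\inf_{w\in H_{0}^{1}(\Omega)\setminus\{0\}}\frac{\int_{\Omega}|\nabla w|^{2}dx}{\int_{\Omega}|w|^{2}dx}.
\]
Since $\Omega$ is bounded, the Poincar\'e inequality gives $\int_{\Omega}|w|^{2}dx\leq C_{\Omega}\int_{\Omega}|\nabla w|^{2}dx$ for all $w\in H_{0}^{1}(\Omega)$, hence $\lambda_{1}\geq C_{\Omega}^{-1}>0$, which already establishes the displayed formula and the strict positivity of $\lambda_{1}$.

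Next I would show the infimum is attained. Take a minimizing sequence $\{w_{k}\}\subset H_{0}^{1}(\Omega)$ normalized by $\|w_{k}\|_{L^{2}(\Omega)}=1$ with $\int_{\Omega}|\nabla w_{k}|^{2}dx\to\lambda_{1}$. Then $\{w_{k}\}$ is bounded in $H_{0}^{1}(\Omega)$, so after passing to a subsequence $w_{k}\rightharpoonup\phi_{1}$ weakly in $H_{0}^{1}(\Omega)$; by the Rellich--Kondrachov compact embedding $H_{0}^{1}(\Omega)\hookrightarrow\hookrightarrow L^{2}(\Omega)$ we may also assume $w_{k}\to\phi_{1}$ strongly in $L^{2}(\Omega)$, so $\|\phi_{1}\|_{L^{2}(\Omega)}=1$ and in particular $\phi_{1}\not\equiv0$. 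Weak lower semicontinuity of $w\mapsto\int_{\Omega}|\nabla w|^{2}dx$ yields $\int_{\Omega}|\nabla\phi_{1}|^{2}dx\leq\liminf_{k}\int_{\Omega}|\nabla w_{k}|^{2}dx=\lambda_{1}$, and combined with the definition of $\lambda_{1}$ this forces $\int_{\Omega}|\nabla\phi_{1}|^{2}dx=\lambda_{1}$, so $\phi_{1}$ is a minimizer.

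I would then extract the Euler--Lagrange equation: for any $v\in H_{0}^{1}(\Omega)$ the map $t\mapsto\int_{\Omega}|\nabla(\phi_{1}+tv)|^{2}dx-\lambda_{1}\int_{\Omega}|\phi_{1}+tv|^{2}dx$ has a minimum at $t=0$, so differentiating at $t=0$ gives $\int_{\Omega}\nabla\phi_{1}\cdot\nabla v\,dx=\lambda_{1}\int_{\Omega}\phi_{1}v\,dx$ for every $v\in H_{0}^{1}(\Omega)$. This is exactly the weak form of $-\Delta\phi_{1}=\lambda_{1}\phi_{1}$ with $\phi_{1}=0$ on $\partial\Omega$, and standard elliptic regularity (bootstrapping, using that $\partial\Omega$ is smooth) upgrades $\phi_{1}$ to a classical solution. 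Finally, for positivity, since $|\nabla|w||=|\nabla w|$ a.e.\ for $w\in H_{0}^{1}(\Omega)$, the function $|\phi_{1}|$ is also a minimizer, so we may assume $\phi_{1}\geq0$; then $\phi_{1}\geq0$, $\phi_{1}\not\equiv0$ solves $-\Delta\phi_{1}=\lambda_{1}\phi_{1}\geq0$, and the strong maximum principle (equivalently Harnack's inequality) on the connected open set $\Omega$ forces $\phi_{1}>0$ in $\Omega$. I expect this last passage from nonnegativity to strict positivity to be the only genuinely nontrivial point; everything else is the routine direct method plus the compactness of the Sobolev embedding on a bounded domain. (Since the lemma is quoted from \cite{KP, P}, one could alternatively just cite it, but the above is the argument one has in mind.)
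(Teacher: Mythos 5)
Your argument is correct and complete: the Poincar\'e inequality gives $\lambda_{1}>0$, the direct method with Rellich--Kondrachov produces a minimizer, the observation that $w\mapsto\int_{\Omega}|\nabla w|^{2}dx-\lambda_{1}\int_{\Omega}|w|^{2}dx$ is nonnegative and vanishes at $\phi_{1}$ cleanly yields the Euler--Lagrange equation without worrying about the $L^{2}$ constraint, and replacing $\phi_{1}$ by $|\phi_{1}|$ plus the strong maximum principle on the (connected) domain $\Omega$ gives strict positivity. The paper itself offers no proof --- it quotes the lemma from the cited references --- and what you have written is precisely the standard argument those references rely on, so there is nothing to compare beyond noting that your reconstruction is the expected one.
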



Let us recall the condition $(C)$ : for some $\alpha, \beta, \gamma>0$,
\[
\hbox{$(C)$\hspace{1cm}  $\alpha \int_{0}^{u}f\left(s\right)ds\leq uf(u) + \beta u^{2} + \gamma,\,\,u>0$},
\]
where $0<\beta\leq\frac{\left(\alpha-2\right)\lambda_{1}}{2}$ and $\lambda_{1}$ is the first eigenvalue of the Laplacian $\Delta$ on $\Omega$. \\

\begin{Rmk}
	We will discuss the condition $(C)$ in the section \ref{section 3 conditions}, comparing with the condition $(A)$ and $(B)$ introduced in the first section, together with the condition $J(0)>0$ for the initial data.
\end{Rmk}

Now, we state and prove our result for $p=2$.

\begin{Thm}\label{BlowB}
Let a function $f$ satisfy the condition $(C)$. If the initial data $u_{0}\in C^{1}(\overline{\Omega})$ with $u_{0}=0$ on $\partial \Omega$ satisfies
\begin{equation}\label{11}
-\frac{1}{2}\int_{\Omega}\left|\nabla u_{0}\left(x\right)\right|^{2}dx+\int_{\Omega}\left[\int_{0}^{u_{0}\left(x\right)}f(s)ds-\gamma\right]dx>0,
\end{equation}
then the nonnegative classical solutions $u$ to the equations \eqref{equation} blows up at finite time $T^{*}$, in the sense of
\[
\lim_{t\rightarrow T^{*}}\int_{0}^{t}\int_{\Omega}u^{2}\left(x,s\right)ds=+\infty,
\]
where $\gamma$ is the constant in the condition $(C)$.
\end{Thm}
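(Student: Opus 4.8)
The plan is to use the classical concavity method of Levine applied to an auxiliary functional of the form
\[
A(t) = \int_{0}^{t}\int_{\Omega} u^{2}(x,s)\,dx\,ds + (T_{0}-t)\int_{\Omega}u_{0}^{2}(x)\,dx + b(t+\tau)^{2},
\]
for constants $b,\tau>0$ and $T_{0}>0$ to be fixed later. I would first set up the energy functional
\[
J(t) = -\tfrac{1}{2}\int_{\Omega}|\nabla u(x,t)|^{2}\,dx + \int_{\Omega}\bigl[F(u(x,t))-\gamma\bigr]\,dx, \qquad F(u)=\int_{0}^{u}f(s)\,ds,
\]
and show that $J$ is nondecreasing along the flow: differentiating and using the equation gives $J'(t)=\int_{\Omega}u_{t}^{2}\,dx\ge 0$, so $J(t)\ge J(0)>0$ by hypothesis \eqref{11}. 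This positivity of $J$ is what replaces the sign condition used in the older arguments.

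Next I would compute $A'(t) = \int_{\Omega}u^{2}(x,t)\,dx - \int_{\Omega}u_{0}^{2}\,dx + 2b(t+\tau) = 2\int_{0}^{t}\int_{\Omega}u u_{s}\,dx\,ds + 2b(t+\tau)$, and then $A''(t) = 2\int_{\Omega}u u_{t}\,dx + 2b = 2\int_{\Omega}\bigl(-|\nabla u|^{2} + uf(u)\bigr)dx + 2b$. The crux is to bound $A''(t)$ below in terms of $J(t)$ and the energy. Using the condition $(C)$ in the form $uf(u)\ge \alpha F(u) - \beta u^{2} - \gamma$, together with the Poincaré/eigenvalue inequality $\lambda_{1}\int_{\Omega}u^{2}\le\int_{\Omega}|\nabla u|^{2}$ from Lemma \ref{eigenvalue}, one absorbs the $\beta u^{2}$ term: since $0<\beta\le(\alpha-2)\lambda_{1}/2$, we get
\[
A''(t) \ge 2\Bigl[-|\nabla u|^{2}_{L^{2}} + \alpha\!\int_{\Omega}F(u) - \beta\!\int_{\Omega}u^{2} - \gamma|\Omega| \Bigr] + 2b \ge 2(\alpha+2)\Bigl[\, \tfrac{\alpha-2}{2(\alpha+2)}\cdot(\text{stuff}) \,\Bigr] \ge 2(\alpha+2)J(t) + 2b,
\]
the point being that the coefficient bookkeeping works out precisely because of the constraint linking $\beta$, $\alpha$, and $\lambda_{1}$; this is the step I expect to be the main obstacle, since it requires choosing how to split $-\int|\nabla u|^{2}$ between reconstructing $(\alpha+2)J(t)$ and cancelling $\beta\int u^{2}$ via $\lambda_1$. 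Hence $A''(t)\ge 2(\alpha+2)J(0)+2b>0$, which already forces $A$ (and thus $\int_0^t\int_\Omega u^2$) to grow; but to get \emph{finite-time} blow-up we push further.

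Finally I would establish the concavity inequality $A(t)A''(t) - \tfrac{\alpha+2}{2}\,A'(t)^{2}\ge 0$. Writing $A'(t)^{2}$ via Cauchy–Schwarz: $A'(t)^2 = 4\bigl(\int_0^t\!\int_\Omega u u_s + b(t+\tau)\bigr)^2 \le 4\bigl(\int_0^t\!\int_\Omega u^2 + b(t+\tau)^2\bigr)\bigl(\int_0^t\!\int_\Omega u_s^2 + b\bigr)$, and comparing with $A(t)A''(t)\ge \bigl(\int_0^t\!\int_\Omega u^2 + b(t+\tau)^2 + (T_0-t)\|u_0\|_2^2\bigr)\cdot 2\bigl((\alpha+2)(\text{energy terms}) + 2b\bigr)$; using $J'(s)=\|u_s\|_2^2$ to turn $\int_0^t\|u_s\|_2^2$ into $J(t)-J(0)$ plus boundary terms, the inequality $A A'' \ge \tfrac{\alpha+2}{2}(A')^2$ follows for a suitable choice of $b$ and $\tau$ (with $\tau$ large enough to dominate the negative contribution of $-J(0)$, or using $J(0)>0$ directly to discard it). Then $\bigl(A^{-(\alpha/2)}\bigr)'' \le 0$, so $A^{-\alpha/2}$ is concave and positive with negative slope at, say, $t=0$ once $b(t+\tau)$ is chosen to make $A'(0)>0$; hence $A^{-\alpha/2}$ reaches zero at some finite $T^{*}\le T_{0}$, i.e. $A(t)\to\infty$, and since the other two terms in $A$ stay bounded on $[0,T_0]$ this means $\int_0^t\int_\Omega u^2(x,s)\,dx\,ds\to\infty$ as $t\to T^*$, which is the claimed blow-up.
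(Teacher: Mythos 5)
Your proposal follows essentially the same route as the paper: the same energy functional $J$ with $J'(t)=\int_{\Omega}u_t^2\,dx$, the same use of condition $(C)$ together with the eigenvalue inequality $\int_{\Omega}|\nabla u|^2\,dx\ge\lambda_1\int_{\Omega}u^2\,dx$ to absorb the $\beta u^2$ term into the surplus $(\alpha-2)\int_{\Omega}|\nabla u|^2\,dx$, and the same concavity conclusion. The only structural difference is cosmetic: the paper works with the simpler functional $I(t)=\int_0^t\int_\Omega u^2\,dx\,ds+M$ and handles the leftover $\bigl[\int_\Omega u_0^2\,dx\bigr]^2$ term in $(I')^2$ by a $\delta$-weighted Cauchy--Schwarz split plus a large choice of $M$, whereas you use Levine's original functional with the $(T_0-t)\|u_0\|_2^2+b(t+\tau)^2$ correction; both work. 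One bookkeeping point you flagged does need fixing: the lower bound for $A''$ comes out as $2\alpha J(t)$, not $2(\alpha+2)J(t)$ --- writing $-2\int|\nabla u|^2+2\alpha\int F-2\beta\int u^2-2\alpha\gamma|\Omega|=2\alpha J(t)+(\alpha-2)\int|\nabla u|^2-2\beta\int u^2$ and then invoking $\beta\le(\alpha-2)\lambda_1/2$ kills the remainder, and there is no spare $\int F$ to upgrade the coefficient to $\alpha+2$. Correspondingly the concavity exponent should be $1+\sigma=\alpha/2$ (or the paper's $\sqrt{\alpha/2}$, which is what survives the extra $(1+\delta)$ factor from its Cauchy--Schwarz split); with that correction, and $\alpha>2$ guaranteed by the constraint on $\beta$, your argument closes exactly as the paper's does.
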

\begin{proof}
We first define a functional $J$ by
\[
J\left(t\right):=-\frac{1}{2}\int_{\Omega}\left|\nabla u\left(x,t\right)\right|^{2}dx+\int_{\Omega}\left[F\left(u\left(x,t\right)\right)-\gamma\right]dx,\;\;\;t\geq0,
\]
where $F(u):=\int_{0}^{u}f(s)ds$.

Then by \eqref{11},
\[
J\left(0\right)=-\frac{1}{2}\int_{\Omega}\left|\nabla u_{0}\left(x\right)\right|^{2}dx+\int_{\Omega}\left[F(u_{0}\left(x\right))-\gamma\right]dx>0.
\]
and we can see that
\begin{equation}\label{Jtt}
J(t)=J(0)+\int_{0}^{t}\frac{d}{dt}J(s)ds=J(0)+\int_{0}^{t}\int_{\Omega}u_{t}^{2}(x,s)dxds.
\end{equation}

Now, we introduce a new function
\begin{equation}\label{It}
I\left(t\right)=\int_{0}^{t}\int_{\Omega}u^{2}\left(x,s\right)dxds+M,\,t\geq 0,
\end{equation}
where $M>0$ is a constant to be determined later. Then it is easy to see that
\begin{equation}\label{15}
\begin{aligned}
I'\left(t\right) & =  \int_{\Omega}u^{2}\left(x,t\right)dx\\
& =  \int_{\Omega}\int_{0}^{t}2u\left(x,s\right)u_{t}\left(x,s\right)dsdx+\int_{\Omega}u_{0}^{2}\left(x\right)dx.
\end{aligned}
\end{equation}

Then we use integration by parts, the condition $(C)$, Lemma \ref{eigenvalue}, and \eqref{Jtt} in turn to obtain
\begin{equation}\label{14}
\begin{aligned}
I''\left(t\right)&= \frac{d}{dt}\int_{\Omega}u^{2}\left(x,t\right)dx\\
&=\int_{\Omega}2u\left(x,t\right)u_{t}\left(x,t\right)dx\\
&=\int_{\Omega}2u\left(x,t\right)\left[\Delta u\left(x,t\right) + f\left(u(x,t)\right) \right]dx\\
&=-2\int_{\Omega}\left|\nabla u\left(x,t\right)\right|^{2}dx + \int_{\Omega}2u(x,t)f\left(u(x,t)\right)dx\\
&\geq-2\int_{\Omega}\left|\nabla u\left(x,t\right)\right|^{2}dx + \int_{\Omega}2\left[\alpha F(u(x,t))-\beta u^{2}(x,t)-\alpha \gamma\right] dx\\
&=2\alpha\left[-\frac{1}{2}\int_{\Omega}\left|\nabla u(x,t)\right|^{2}dx+\int_{\Omega}[F(u(x,t))-\gamma] dx\right]\\
&+(\alpha-2)\int_{\Omega}\left|\nabla u\left(x,t\right)\right|^{2}dx - 2\beta \int_{\Omega}u^{2}(x,t)dx\\
&\geq 2\alpha J(t)+\left[(\alpha-2)\lambda_{1}-2\beta\right]\int_{\Omega}u^{2}(x,t)dx\\
&\geq 2\alpha \left[J(0)+\int_{0}^{t}\int_{\Omega}u_{t}^{2}(x,s)dxds\right].
\end{aligned}
\end{equation}

Using the Schwarz inequality, we obtain
\begin{equation}\label{16}
\begin{aligned}
& I'\left(t\right) ^{2}\\
& \leq 4\left(1+\delta\right)\left[\int_{\Omega}\int_{0}^{t}u\left(x,s\right)u_{t}\left(x,s\right)dsdx\right]^{2}+\left(1+\frac{1}{\delta}\right)\left[\int_{\Omega}u_{0}^{2}\left(x\right)dx\right]^{2}\\
& \leq  4\left(1+\delta\right)\left[\int_{\Omega}\left(\int_{0}^{t}u^{2}\left(x,s\right)ds\right)^{\frac{1}{2}}\left(\int_{0}^{t}u_{t}^{2}\left(x,s\right)ds\right)^{\frac{1}{2}}dx\right]^{2}\\
&+\left(1+\frac{1}{\delta}\right)\left[\int_{\Omega}u_{0}^{2}\left(x\right)dx\right]^{2}\\
& \leq  4\left(1+\delta\right)\left(\int_{\Omega}\int_{0}^{t}u^{2}\left(x,s\right)dsdx\right)\left(\int_{\Omega}\int_{0}^{t}u_{t}^{2}\left(x,s\right)dsdx\right)\\
&+\left(1+\frac{1}{\delta}\right)\left[\int_{\Omega}u_{0}^{2}\left(x\right)dx\right]^{2},
\end{aligned}
\end{equation}
where $\delta>0$ is arbitrary. Combining the above estimates \eqref{It}, \eqref{14}, and \eqref{16}, we obtain that for $\sigma=\delta=\sqrt{\frac{\alpha}{2}}-1>0$,
\begin{equation*}
\begin{aligned}
&I''\left(t\right)I\left(t\right)-\left(1+\sigma\right)I'\left(t\right)^{2}\\
& \geq 2\alpha\left[J\left(0\right)+\int_{0}^{t}\int_{\Omega}u_{t}^{2}\left(x,s\right)dxds\right]\left[\int_{0}^{t}\int_{\Omega}u^{2}\left(x,s\right)dxds+M\right]\\
& -  4\left(1+\sigma\right)\left(1+\delta\right)\left[\int_{\Omega}\int_{0}^{t}u^{2}\left(x,s\right)dsdx\right]\left[\int_{\Omega}\int_{0}^{t}u_{t}^{2}\left(x,s\right)dsdx\right]\\
& -  \left(1+\sigma\right)\left(1+\frac{1}{\delta}\right)\left[\int_{\Omega}u_{0}^{2}\left(x\right)dx\right]^{2}\\
& \geq  2\alpha M\cdot J\left(0\right)-\left(1+\sigma\right)\left(1+\frac{1}{\delta}\right)\left[\int_{\Omega}u_{0}^{2}\left(x\right)dx\right]^{2}.
\end{aligned}
\end{equation*}

Since $J\left(0\right)>0$ by the assumption, we can choose $M>0$ to be large enough so that
\begin{equation}\label{17}
I''(t)I\left(t\right)-\left(1+\sigma\right)I'\left(t\right)^{2}>0.
\end{equation}

This inequality \eqref{17} implies that for $t\geq0$,
\[
\frac{d}{dt}\left[\frac{I'\left(t\right)}{I^{\sigma+1}\left(t\right)}\right]>0  \hbox{  i.e. }I'\left(t\right)\geq\left[\frac{I'\left(0\right)}{I^{\sigma+1}\left(0\right)}\right]I^{\sigma+1}\left(t\right).
\]

Therefore, it follows that $I\left(t\right)$ cannot remain finite for all $t>0$. In other words, the solutions $u$  blow up in finite time $T^{*}$.
\end{proof}

\begin{Rmk}\label{blow-up time p=2}
We estimate the blow-up time of the solutions to equation \eqref{equation} roughly. Put
\[
M:=\frac{\frac{\alpha}{\alpha-2}\left(1+\sqrt{\frac{\alpha}{2}}\right)\left[{\int_{\Omega}u_{0}^{2}\left(x\right)dx}\right]^{2}}{2\alpha\left[-\frac{1}{2}\int_{\Omega}\left|\nabla u_{0}\left(x\right)\right|^{2}dx+\int_{\Omega}\left[F(u_{0}\left(x\right))-\gamma\right]dx\right]}.
\]
Then we obtain that
\[
\begin{cases}
I'\left(t\right)\geq\left[\frac{\int_{\Omega}u_{0}^{2}\left(x\right)dx}{M^{\sigma+1}}\right]I^{\sigma+1}\left(t\right), & t>0,\\
I\left(0\right)=M,
\end{cases}
\]
which implies
\[
I\left(t\right)\geq\left[\frac{1}{M^{\sigma}}-\frac{\sigma\int_{\Omega}u_{0}^{2}\left(x\right)dx}{M^{\sigma+1}}t\right]^{-\frac{1}{\sigma}}
\]
where $\sigma=\sqrt{\frac{\alpha}{2}}-1>0$. Then the blow-up time $T^{*}$ satisfies
\begin{equation*}\label{22}
0<T^{*}\leq\frac{M}{\sigma\int_{\Omega}u_{0}^{2}\left(x\right)dx}.
\end{equation*}
\end{Rmk}


\section{Case 2 : $p>2$ and the weak solutions}\label{BCMp}

In this section, we discuss the blow-up of solutions to the equations \eqref{equation} for the case $p>2$, which is the main part of our work. In order to make this section self-contained we state, without proof, a local existence result of \cite{Z}.

%
%
%
%

\begin{Thm}[See \cite{Z}]
	Let $f$ be in $C(\mathbb{R})$ and there exists a function $g(u)\in C^{1}(\mathbb{R})$ such that
	\[
	\left|f(u)\right|\leq g(u).
	\]
	Then for any $u_{0}\in L^{\infty}(\Omega)\cap W_{0}^{1,p}(\Omega)$, there exists $T>0$ such that \eqref{equation} has a solution
	\[
    u\in L^{\infty}(\Omega\times(0,T))\cap L^{p}(0,T;W_{0}^{1,p}(\Omega)), u_{t}\in L^{2}(\Omega\times(0,T)).
	\]
\end{Thm}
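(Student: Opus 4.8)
The plan is to run the classical scheme for degenerate quasilinear parabolic equations: truncate $f$ so that it becomes bounded and Lipschitz, regularize the $p$-Laplacian so that it becomes uniformly parabolic, solve the resulting non-degenerate problems by standard theory, establish a priori estimates uniform in the regularization parameter, and pass to the limit using the Aubin--Lions compactness lemma together with the monotonicity of the $p$-Laplacian. At the end an $L^\infty$ bound shows that on a short time interval the truncated solution solves the original problem.

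First I would replace $f$ by a bounded Lipschitz nonlinearity. Since $u_0\in L^\infty(\Omega)$, set $M_0=\|u_0\|_{L^\infty(\Omega)}$ and $K=2M_0+1$, and let $f_K$ agree with $f$ on $[-K,K]$, be constant outside $[-K-1,K+1]$, and (after mollifying, which is harmless since $f$ is only assumed continuous) be Lipschitz; then $\|f_K\|_{L^\infty(\mathbb{R})}\le\max_{|s|\le K+1}g(s)=:B<\infty$, and this uniform bound --- which is exactly what the hypothesis $|f|\le g$, $g\in C^1$, provides --- is used both in the $L^\infty$ estimate and in the passage to the limit. For $\varepsilon\in(0,1]$ consider the regularized problem
\[
\partial_t u_\varepsilon=\operatorname{div}\big((|\nabla u_\varepsilon|^{2}+\varepsilon)^{\frac{p-2}{2}}\nabla u_\varepsilon\big)+f_K(u_\varepsilon)\quad\text{in }\Omega\times(0,T),
\]
with $u_\varepsilon=0$ on $\partial\Omega$ and $u_\varepsilon(\cdot,0)=u_{0,\varepsilon}$, where $u_{0,\varepsilon}$ is a smooth approximation of $u_0$ with $\|u_{0,\varepsilon}\|_{L^\infty}\le M_0$ and $u_{0,\varepsilon}\to u_0$ in $W_0^{1,p}(\Omega)$. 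Since $p\ge2$ the diffusion coefficient is $\ge\varepsilon^{(p-2)/2}>0$, so this is a non-degenerate quasilinear parabolic problem with a bounded Lipschitz reaction; it admits a (smooth) local solution $u_\varepsilon$, for instance by a Galerkin approximation in the eigenbasis of $-\Delta$, or by a Schauder fixed point in $C([0,T];L^2(\Omega))$ followed by parabolic bootstrap.

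Next I would derive three a priori estimates, all uniform in $\varepsilon$. (i) Testing with $u_\varepsilon$ and using Young's inequality bounds $u_\varepsilon$ in $L^\infty(0,T;L^2(\Omega))\cap L^p(0,T;W_0^{1,p}(\Omega))$. (ii) Testing with $\partial_t u_\varepsilon$ --- legitimate since the $\varepsilon$-problem is non-degenerate and $u_\varepsilon$ is smooth --- and using that the principal part contributes $-\frac{d}{dt}\,\frac1p\int_\Omega(|\nabla u_\varepsilon|^2+\varepsilon)^{p/2}\,dx$, bounds $\partial_t u_\varepsilon$ in $L^2(\Omega\times(0,T))$ and $\nabla u_\varepsilon$ in $L^\infty(0,T;L^p(\Omega))$ (this is where $\nabla u_0\in L^p$ enters). (iii) Since $|f_K|\le B$, the functions $M_0+Bt$ and $-(M_0+Bt)$ are respectively a super- and a sub-solution of the $\varepsilon$-problem dominating its initial and boundary data, so the comparison principle for non-degenerate quasilinear parabolic equations gives $\|u_\varepsilon\|_{L^\infty(\Omega\times(0,t))}\le M_0+Bt$ (a Stampacchia truncation argument gives the same bound).

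Finally I would let $\varepsilon\to0$. By (i)--(ii) and Aubin--Lions, along a subsequence $u_\varepsilon\to u$ strongly in $L^p(\Omega\times(0,T))$ and a.e., weakly-$*$ in $L^\infty(0,T;W_0^{1,p}(\Omega))$, with $\partial_t u_\varepsilon\rightharpoonup\partial_t u$ in $L^2(\Omega\times(0,T))$ and $(|\nabla u_\varepsilon|^2+\varepsilon)^{\frac{p-2}{2}}\nabla u_\varepsilon\rightharpoonup\chi$ in $L^{p'}(\Omega\times(0,T))^N$; the reaction term passes to the limit by a.e.\ convergence, continuity of $f_K$, and dominated convergence (domination by $B$). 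The step I expect to be the main obstacle is identifying $\chi=|\nabla u|^{p-2}\nabla u$, i.e.\ the Minty--Browder monotonicity argument: from the energy identity for $u_\varepsilon$, weak lower semicontinuity of $\|u_\varepsilon(T)\|_{L^2}^2$, the convergence $u_\varepsilon(T)\rightharpoonup u(T)$ in $L^2(\Omega)$, and the energy identity for the limit, one gets $\limsup_{\varepsilon\to0}\int_0^T\!\!\int_\Omega(|\nabla u_\varepsilon|^2+\varepsilon)^{\frac{p-2}{2}}|\nabla u_\varepsilon|^2\,dx\,dt\le\int_0^T\!\!\int_\Omega\chi\cdot\nabla u\,dx\,dt$; then for arbitrary $w\in L^p(0,T;W_0^{1,p}(\Omega))$ the monotonicity inequality for the regularized fluxes passes to the limit to give $\int_0^T\!\!\int_\Omega(\chi-|\nabla w|^{p-2}\nabla w)\cdot(\nabla u-\nabla w)\,dx\,dt\ge0$, and taking $w=u\mp\lambda v$, $\lambda\downarrow0$, forces $\chi=|\nabla u|^{p-2}\nabla u$. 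Thus $u$ is a weak solution of the $f_K$-problem with the stated regularity; by estimate (iii), choosing $T\le1/B$ makes $M_0+BT\le K$, so $f_K(u)=f(u)$ a.e.\ and $u$ solves \eqref{equation} on $(0,T)$.
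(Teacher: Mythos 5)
The paper does not prove this theorem at all: it is quoted verbatim from Zhao \cite{Z} (``we state, without proof, a local existence result''), so there is no internal proof to compare against. Your outline is, in substance, the standard argument and the one Zhao himself uses: truncate the nonlinearity using the $L^\infty$ bound on $u_0$, regularize the degenerate principal part to $(|\nabla u|^2+\varepsilon)^{(p-2)/2}\nabla u$, obtain the three uniform estimates (energy, time-derivative via testing with $u_t$, and the $L^\infty$ bound $M_0+Bt$ by comparison), and pass to the limit by Aubin--Lions plus Minty--Browder to identify the weak limit of the fluxes. The structure is correct and each estimate is justified; in particular the $u_t$-test is exactly where $u_0\in W_0^{1,p}$ enters and is what delivers $u_t\in L^2$ in the conclusion. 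One point deserves more care than your phrase ``harmless'': if you mollify $f_K$ to make it Lipschitz, the limit function solves the equation with the mollified nonlinearity, not with $f$, so you need either a second limit passage in the mollification parameter (the estimates are uniform in it, so this works but must be said) or, cleaner, to solve the $\varepsilon$-problems directly with the merely continuous, bounded $f_K$ via the Schauder fixed point you mention, in which case no mollification is needed. Also note that the hypothesis $|f|\le g$ with $g\in C^1$ plays no real role in your argument beyond local boundedness of $f$, which already follows from $f\in C(\mathbb{R})$; that is a feature of how the theorem is stated rather than a defect of your proof.
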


The following lemmas are used when proving Theorem \ref{BlowBp}.

\begin{Lemma}[\cite{KP, P}]\label{eigenvalue_p}
	For $1<p<\infty$, there exist $\lambda_{1, p}>0$ and $\phi_{1, p}\in W_{0}^{1,p}(\Omega)$  with $\phi_{1, p}>0$ in $\Omega$  such that
	\[
	\begin{cases}
	-\Delta_{p} \phi_{1, p}(x)=\lambda_{1, p} |\phi_{1, p}(x)|^{p-2}\phi_{1, p}(x),  & x\in\Omega \\
	\phi_{1, p}\left(x\right)=0, & x\in\partial \Omega.
	\end{cases}
	\]
	Moreover, $\lambda_{1, p}$ is given by
	\[
	\lambda_{1, p} = \inf_{w\in W_{0}^{1,p}(\Omega)}\frac{\int_{\Omega}|\nabla w|^{p}dx}{\int_{\Omega}|w|^{p}dx}>0
	\]
	In the above, we recall that the number $\lambda_{1, p}$ is the first eigenvalue of $\Delta_{p}$ and $\phi_{1, p}$ is a corresponding eigenfunction.
\end{Lemma}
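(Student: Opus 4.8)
The final statement is the existence and variational characterization of the first eigenvalue $\lambda_{1,p}$ of $\Delta_p$ on $W_0^{1,p}(\Omega)$; these facts are recorded in \cite{KP,P}, and I sketch the standard argument.

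The plan is to produce $\lambda_{1,p}$ and $\phi_{1,p}$ by the direct method of the calculus of variations applied to the Rayleigh quotient, and then to upgrade the minimizer to a positive function. Define
\[
\lambda_{1,p} := \inf\Bigl\{\, \textstyle\int_\Omega |\nabla w|^p\,dx \; : \; w \in W_0^{1,p}(\Omega),\ \int_\Omega |w|^p\,dx = 1 \,\Bigr\}.
\]
Since the quotient $\int_\Omega|\nabla w|^p/\int_\Omega|w|^p$ is invariant under $w\mapsto cw$, this is the same as minimizing the quotient over $W_0^{1,p}(\Omega)\setminus\{0\}$, which is the displayed formula in the lemma. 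The strict positivity $\lambda_{1,p}>0$ is immediate from the Poincar\'e inequality on the bounded domain $\Omega$: there is $C_\Omega>0$ with $\int_\Omega|w|^p\le C_\Omega\int_\Omega|\nabla w|^p$ for all $w\in W_0^{1,p}(\Omega)$, so the infimum is at least $1/C_\Omega$.

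For existence of a minimizer I would take a minimizing sequence $(w_k)\subset W_0^{1,p}(\Omega)$ with $\int_\Omega|w_k|^p=1$ and $\int_\Omega|\nabla w_k|^p\to\lambda_{1,p}$. Then $(w_k)$ is bounded in $W_0^{1,p}(\Omega)$; since $1<p<\infty$ the space is reflexive, so after passing to a subsequence $w_k\rightharpoonup\phi_{1,p}$ weakly in $W_0^{1,p}(\Omega)$, and by the Rellich--Kondrachov theorem $w_k\to\phi_{1,p}$ strongly in $L^p(\Omega)$. Hence $\int_\Omega|\phi_{1,p}|^p=1$, so $\phi_{1,p}\not\equiv0$, while weak lower semicontinuity of $w\mapsto\|\nabla w\|_{L^p}$ gives $\int_\Omega|\nabla\phi_{1,p}|^p\le\lambda_{1,p}$; by the definition of the infimum this is an equality, so $\phi_{1,p}$ realizes $\lambda_{1,p}$. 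Next I would derive the Euler--Lagrange equation: for $\psi\in C_c^\infty(\Omega)$ set $g(t)=\int_\Omega|\nabla(\phi_{1,p}+t\psi)|^p\,dx-\lambda_{1,p}\int_\Omega|\phi_{1,p}+t\psi|^p\,dx$, so $g(t)\ge0=g(0)$ and thus $g'(0)=0$; differentiating under the integral (justified by dominated convergence, using $\bigl|\,|a+tb|^p-|a|^p\,\bigr|\le C(|a|^{p-1}+|b|^{p-1})|b|\,|t|$) yields
\[
\int_\Omega|\nabla\phi_{1,p}|^{p-2}\nabla\phi_{1,p}\cdot\nabla\psi\,dx=\lambda_{1,p}\int_\Omega|\phi_{1,p}|^{p-2}\phi_{1,p}\,\psi\,dx,
\]
i.e. $-\Delta_p\phi_{1,p}=\lambda_{1,p}|\phi_{1,p}|^{p-2}\phi_{1,p}$ in the weak sense, with $\phi_{1,p}\in W_0^{1,p}(\Omega)$ encoding the homogeneous Dirichlet condition.

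It remains to arrange $\phi_{1,p}>0$ in $\Omega$. Since $|\nabla|w||=|\nabla w|$ a.e., replacing $\phi_{1,p}$ by $|\phi_{1,p}|$ leaves the Rayleigh quotient unchanged, so we may take $\phi_{1,p}\ge0$. By the interior regularity theory for the $p$-Laplacian (DiBenedetto, Tolksdorf) one has $\phi_{1,p}\in C^{1,\alpha}_{\mathrm{loc}}(\Omega)$; since $-\Delta_p\phi_{1,p}=\lambda_{1,p}\phi_{1,p}^{\,p-1}\ge0$, the function $\phi_{1,p}$ is a nonnegative, non-trivial supersolution of $\Delta_p$, and V\'azquez's strong maximum principle for the $p$-Laplacian then forces $\phi_{1,p}>0$ throughout $\Omega$. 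I expect the positivity step to be the only genuinely non-soft point: it relies on the nonlinear $C^{1,\alpha}$ regularity and the strong maximum principle for $\Delta_p$, whereas everything before it is the routine direct method combined with Poincar\'e's inequality and Rellich--Kondrachov compactness.
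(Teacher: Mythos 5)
Your sketch is correct and is exactly the standard argument (direct method with Poincar\'e, reflexivity and Rellich--Kondrachov, Euler--Lagrange equation, then $C^{1,\alpha}$ regularity plus V\'azquez's strong maximum principle for positivity); the paper itself gives no proof, simply citing \cite{KP, P}, and your outline reproduces the proof found in those references. The only point worth flagging is that the displayed infimum should be taken over $w \in W_0^{1,p}(\Omega)\setminus\{0\}$, as you correctly note via the normalization $\int_\Omega |w|^p\,dx = 1$.
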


%
%

Let us recall that for some $\alpha, \beta, \gamma>0$,
\[
\hbox{$(C_{p})$\hspace{1cm}  $\alpha \int_{0}^{u}f\left(s\right)ds\leq uf(u) + \beta u^{p} + \gamma,\,\,u>0$},
\]
where $0<\beta\leq\frac{\left(\alpha-p\right)\lambda_{1, p}}{p}$ and $\lambda_{1, p}$ is the first eigenvalue of the $p$-Laplacian $\Delta_{p}$ on $\Omega$.

\begin{Rmk}
	We will discuss the condition $(C_{p})$ in the next section, comparing with the conditions $(A_{p})$ and $(B_{p})$ introduced in the first section, together with the initial data condition.
\end{Rmk}

Now, we state and prove our main result.

\begin{Thm}\label{BlowBp}
	Let a function $f$ satisfy the condition $(C_{p})$ and $p>2$. If the initial data $u_{0}\in L^{\infty}(\Omega)\cap W_{0}^{1,p}(\Omega)$ satisfies
	\begin{equation}\label{J_{p}(0)>0}
	-\frac{1}{p}\int_{\Omega}\left|\nabla u_{0}\left(x\right)\right|^{p}dx+\int_{\Omega}\left[F(u_{0}(x))-\gamma\right]dx>0,
	\end{equation}
	then the nonnegative weak solutions $u$ to the equations \eqref{equation} blows up at finite time $T^{*}$, in the sense of
	\[
	\lim_{t\rightarrow T^{*}}\int_{0}^{t}\int_{\Omega}u^{2}\left(x,s\right)ds=+\infty,
	\]
	where $\gamma$ is the constant in the condition $(C_{p})$.
\end{Thm}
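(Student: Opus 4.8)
The plan is to mimic the concavity argument of Theorem \ref{BlowB}, replacing the Dirichlet energy with the $p$-energy and being careful that we only have weak solutions. Define
\[
J(t):=-\frac{1}{p}\int_{\Omega}|\nabla u(x,t)|^{p}dx+\int_{\Omega}\left[F(u(x,t))-\gamma\right]dx,\qquad t\ge 0,
\]
and
\[
I(t):=\int_{0}^{t}\int_{\Omega}u^{2}(x,s)\,dx\,ds+M,
\]
with $M>0$ to be fixed. First I would record the energy identity $J(t)=J(0)+\int_{0}^{t}\int_{\Omega}u_{t}^{2}\,dx\,ds$, which for weak solutions follows by taking $u_t$ as a test function (legitimate since $u_t\in L^{2}(\Omega\times(0,T))$); in particular $J$ is nondecreasing and $J(0)>0$ by \eqref{J_{p}(0)>0}. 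Then $I'(t)=\int_{\Omega}u^{2}(x,t)\,dx=2\int_{\Omega}\int_{0}^{t}u\,u_{t}\,ds\,dx+\int_{\Omega}u_{0}^{2}\,dx$.

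Next I would compute $I''(t)=\frac{d}{dt}\int_{\Omega}u^{2}dx=2\int_{\Omega}u\,u_{t}\,dx$, and use the weak formulation with test function $u$ itself to get $I''(t)=-2\int_{\Omega}|\nabla u|^{p}dx+2\int_{\Omega}u f(u)\,dx$. Applying the condition $(C_p)$ in the form $2uf(u)\ge 2\alpha F(u)-2\beta u^{p}-2\alpha\gamma$ gives
\[
I''(t)\ge -2\int_{\Omega}|\nabla u|^{p}dx+2\alpha\int_{\Omega}\left[F(u)-\gamma\right]dx-2\beta\int_{\Omega}u^{p}dx.
\]
Writing $-2\int_{\Omega}|\nabla u|^{p}=p\cdot\big(-\frac{2}{p}\big)\int_{\Omega}|\nabla u|^{p}$ and completing to $2\alpha J(t)$, the leftover is $\frac{2(\alpha-p)}{p}\int_{\Omega}|\nabla u|^{p}dx-2\beta\int_{\Omega}u^{p}dx$; by Lemma \ref{eigenvalue_p}, $\int_{\Omega}|\nabla u|^{p}\ge\lambda_{1,p}\int_{\Omega}u^{p}$, so this leftover is $\ge\big[\tfrac{2(\alpha-p)\lambda_{1,p}}{p}-2\beta\big]\int_{\Omega}u^{p}dx\ge 0$ precisely because $\beta\le\frac{(\alpha-p)\lambda_{1,p}}{p}$ (and here I use $p>2$ only in that $\alpha>p$ forces $\alpha>2$, needed below for $\sigma>0$). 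Hence $I''(t)\ge 2\alpha J(t)\ge 2\alpha\big[J(0)+\int_{0}^{t}\int_{\Omega}u_{t}^{2}\,dx\,ds\big]$.

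From here the argument is identical to the $p=2$ case: by the Cauchy–Schwarz estimate \eqref{16}, for any $\delta>0$,
\[
I'(t)^{2}\le 4(1+\delta)\Big(\int_{0}^{t}\!\!\int_{\Omega}u^{2}\Big)\Big(\int_{0}^{t}\!\!\int_{\Omega}u_{t}^{2}\Big)+\Big(1+\tfrac1\delta\Big)\Big[\int_{\Omega}u_{0}^{2}\Big]^{2},
\]
and choosing $\sigma=\delta=\sqrt{\alpha/2}-1>0$ yields $I''(t)I(t)-(1+\sigma)I'(t)^{2}\ge 2\alpha M J(0)-(1+\sigma)(1+\tfrac1\delta)\big[\int_{\Omega}u_{0}^{2}\big]^{2}$, which is positive once $M$ is large enough since $J(0)>0$. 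Then $\big(I^{-\sigma}\big)''<0$-type reasoning (i.e.\ $\frac{d}{dt}[I'/I^{\sigma+1}]>0$) forces $I(t)\to\infty$ in finite time $T^{*}$, giving $\lim_{t\to T^{*}}\int_{0}^{t}\int_{\Omega}u^{2}\,dx\,ds=+\infty$. The main obstacle is purely the weak-solution bookkeeping: justifying the energy identity for $J$ and the identity $I''(t)=-2\int_{\Omega}|\nabla u|^{p}+2\int_{\Omega}uf(u)$ with only the regularity $u\in L^{\infty}\cap L^{p}(0,T;W_{0}^{1,p})$, $u_{t}\in L^{2}$ — one must check $u$ and $u_t$ are admissible test functions and that $t\mapsto\int_{\Omega}u^{2}dx$ is absolutely continuous with the stated derivative. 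Everything after the differential inequality $I''\ge 2\alpha J$ is a verbatim repeat of the concavity computation above and needs no new idea.
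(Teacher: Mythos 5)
Your proposal is correct and follows essentially the same route as the paper: the same functionals $J_p$ and $I_p$, the same use of $(C_p)$ plus the variational characterization of $\lambda_{1,p}$ to absorb the $\beta\int u^{p}$ term, and the same choice $\sigma=\delta=\sqrt{\alpha/2}-1$ in the concavity step. The ``weak-solution bookkeeping'' you flag is handled in the paper by citing Zhao's lemma, which gives the identity $\frac{1}{2}\frac{d}{dt}\int_\Omega u^2=\int_\Omega[-|\nabla u|^p+uf(u)]$ and, for the energy, only the inequality $\int_0^t\int_\Omega u_t^2\le J_p(t)-J_p(0)$ rather than the equality you assert --- but that is precisely the direction your argument uses, so nothing is lost.
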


The following lemma is essential in the proof of the above theorem.


\begin{Lemma}[\cite{Z}]\label{inequalities}
    Let $u$ be the weak solutions to the equations \eqref{equation} with $|\nabla u_{0}|\in L^{p}(\Omega)$. Then
	\begin{enumerate}
	\item[\emph{(i)}]
    \begin{align*}
    \int_{0}^{t}\int_{\Omega}\frac{1}{2}(u^{2}(x,s))_{t}dxds&=\frac{1}{2}\int_{\Omega}\left[u^{2}(x,t)-u_{0}^{2}(x)\right]dx\\
    &=\int_{0}^{t}\int_{\Omega}\left[-\left|\nabla u\left(x,t\right)\right|^{p}+u(x,t)f(u(x,t))\right]dxds
    \end{align*}
    \item[\emph{(ii)}]
    \begin{align*}
    \int_{0}^{t}\int_{\Omega}u_{t}^{2}(x,s)dxds&\leq -\frac{1}{p}\int_{\Omega}\left[\left|\nabla u\left(x,t\right)\right|^{p}-\left|\nabla u_{0}(x)\right|^{p}\right]dx\\
    &+\int_{\Omega}\left[F(u(x,t))-F(u_{0}(x))\right]dx,
    \end{align*}
    \end{enumerate}
    where $F(u):=\int_{0}^{u}f(s)ds$.
\end{Lemma}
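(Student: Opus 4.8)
The plan is to establish each part by testing the weak formulation of \eqref{equation} against a suitable function and then integrating over $[0,t]$: part (i) comes from testing with $u$ itself, and part (ii) from (formally) testing with $u_{t}$. The essential subtlety is that $u_{t}$ is \emph{not} an admissible test function in the energy space $W_{0}^{1,p}(\Omega)$, and it is precisely this deficiency that turns the energy equality into the inequality asserted in (ii).

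For part (i), the middle equality is just the fundamental theorem of calculus applied to $s\mapsto \frac{1}{2}\int_{\Omega}u^{2}(x,s)\,dx$, using $\frac{1}{2}(u^{2})_{t}=u\,u_{t}$. For the last equality I would use that $u\in L^{p}(0,T;W_{0}^{1,p}(\Omega))$, so $u$ itself is an admissible test function. Testing \eqref{equation} with $u$, invoking the standard duality identity $\langle u_{t},u\rangle=\frac{1}{2}\frac{d}{dt}\|u(\cdot,t)\|_{L^{2}}^{2}$ (valid for the Gelfand triple $W_{0}^{1,p}\hookrightarrow L^{2}\hookrightarrow W^{-1,p'}$ since $u_{t}\in L^{2}$), and integrating by parts with the boundary condition $u=0$ on $\partial\Omega$, one obtains
\[
\int_{\Omega}u\,u_{t}\,dx=-\int_{\Omega}|\nabla u|^{p-2}\nabla u\cdot\nabla u\,dx+\int_{\Omega}u f(u)\,dx=-\int_{\Omega}|\nabla u|^{p}\,dx+\int_{\Omega}u f(u)\,dx.
\]
Integrating this in $s$ over $[0,t]$ yields the stated identity.

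For part (ii), the formal computation is to test \eqref{equation} with $u_{t}$. The divergence term is handled by the pointwise chain rule $|\nabla u|^{p-2}\nabla u\cdot\nabla u_{t}=\frac{1}{p}\partial_{t}|\nabla u|^{p}$, so that after integration by parts $\int_{\Omega}u_{t}\,\mathrm{div}(|\nabla u|^{p-2}\nabla u)\,dx=-\frac{1}{p}\frac{d}{dt}\int_{\Omega}|\nabla u|^{p}\,dx$, while the reaction term gives $\int_{\Omega}f(u)u_{t}\,dx=\frac{d}{dt}\int_{\Omega}F(u)\,dx$; integrating in time would produce the stated relation with \emph{equality}. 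Since this test is not directly justified, I would run the identity on a Galerkin (or time-regularized) approximation $u_{m}$, smooth in time, for which the energy identity holds exactly, and then pass to the limit. On the left-hand side, $v\mapsto\int_{0}^{t}\int_{\Omega}|v|^{2}$ is weakly lower semicontinuous on $L^{2}$, giving $\int_{0}^{t}\int_{\Omega}u_{t}^{2}\le\liminf_{m}\int_{0}^{t}\int_{\Omega}(\partial_{t}u_{m})^{2}$, and $w\mapsto\int_{\Omega}|\nabla w|^{p}$ is weakly lower semicontinuous on $W_{0}^{1,p}$, giving $\int_{\Omega}|\nabla u(t)|^{p}\le\liminf_{m}\int_{\Omega}|\nabla u_{m}(t)|^{p}$; meanwhile the right-hand side passes to an exact limit, using strong convergence of the approximate initial data and convergence of $\int_{\Omega}F(u_{m}(t))\,dx$ (the $L^{\infty}$ bound from the cited existence theorem keeps $u_{m}$ in a range where $F$ is Lipschitz). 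Since \emph{both} left-hand terms are lost to $\liminf$ while the right-hand side converges exactly, the equality for $u_{m}$ degrades to the inequality $\le$ claimed in (ii).

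The main obstacle is exactly this limit passage in (ii): one must arrange the approximation so that the energy identity is rigorous at the approximate level, and so that the two weak-lower-semicontinuity estimates both point in the same direction, which is what forces the correct sign of the inequality. Ensuring compatibility with the degenerate $p$-structure of the operator and securing the uniform control needed for the reaction term $f(u_{m})$ — for which the $L^{\infty}\cap W_{0}^{1,p}$ regularity furnished by the existence theorem is essential — are the delicate ingredients; by contrast, part (i) is routine once $u$ is recognized as an admissible test function.
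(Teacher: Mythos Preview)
The paper does not prove this lemma at all: it is quoted from Zhao \cite{Z} and stated without proof, so there is no in-paper argument to compare against. Your sketch is the standard route to such energy relations and is essentially what one finds in \cite{Z}: test with $u$ for (i), and for (ii) establish the exact energy identity on a smooth (e.g.\ Galerkin) approximation and pass to the limit, losing equality to weak lower semicontinuity.

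Your handling of (ii) is correct in spirit, but the way you phrase the limit passage is slightly loose. The clean version is to rewrite the approximate identity as
\[
\int_{0}^{t}\!\!\int_{\Omega}(\partial_{t}u_{m})^{2}\,dx\,ds+\frac{1}{p}\int_{\Omega}|\nabla u_{m}(x,t)|^{p}\,dx
=\frac{1}{p}\int_{\Omega}|\nabla u_{0,m}|^{p}\,dx+\int_{\Omega}\bigl[F(u_{m}(x,t))-F(u_{0,m})\bigr]\,dx,
\]
so that \emph{both} lower-semicontinuous quantities sit on the left with the same sign; then $\liminf$ of the left dominates the sum of the individual $\liminf$s, while the right converges exactly (strong convergence of the approximate initial data in $W_{0}^{1,p}$, plus the uniform $L^{\infty}$ bound together with a.e.\ convergence of $u_{m}(\cdot,t)$ to handle $F$ via dominated convergence). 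That yields the stated inequality. The only point you leave implicit is the a.e.\ (or strong $L^{2}$) convergence of $u_{m}(\cdot,t)\to u(\cdot,t)$ needed for the $F$-term; this comes from compactness (Aubin--Lions) in the approximation scheme and is routine, but it is worth naming since the $L^{\infty}$ bound alone does not suffice.
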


\emph{Proof of Theorem \ref{BlowBp}.}
	We define a function $J_{p}$ by
	\begin{equation}
	J_{p}(t):=-\frac{1}{p}\int_{\Omega}\left|\nabla u(x,t)\right|^{p}dx+\int_{\Omega}[F(u(x,t))-\gamma] dx.
	\end{equation}
	Then it follows from \eqref{J_{p}(0)>0} and Lemma \ref{inequalities} (ii) that,
	\[
	J_{p}(0)=-\frac{1}{p}\int_{\Omega}\left|\nabla u_{0}\left(x\right)\right|^{p}dx+\int_{\Omega}\left[F(u_{0}(x))-\gamma\right]dx>0
	\]
	and 
	\begin{equation}\label{J(t) estimate}
	\begin{aligned}
	J_{p}(t)&=-\frac{1}{p}\int_{\Omega}\left|\nabla u(x,t)\right|^{p}dx+\int_{\Omega}[F(u(x,t))-\gamma] dx\\
	&\geq -\frac{1}{p}\int_{\Omega}\left|\nabla u_{0}(x)\right|^{p}dx + \int_{\Omega}\left[F(u_{0}(x))-\gamma\right]dx+ \int_{0}^{t}\int_{\Omega}u_{t}^{2}(x,s)dxds,\\
	&=J_{p}(0)+\int_{0}^{t}\int_{\Omega}u_{t}^{2}(x,s)dxds,
	\end{aligned}
	\end{equation}
	
	On the other hand, we define a function $I_{p}$ by
	\begin{equation}\label{Ip}
	I_{p}\left(t\right):=\int_{0}^{t}\int_{\Omega}u^{2}\left(x,s\right)dxds+M,\,t\geq 0,
	\end{equation}
	where $M>0$ is a constant to be determined later. Then it is easy to see that
	\begin{equation}
	\begin{aligned}
	I_{p}'\left(t\right) & =  \int_{\Omega}u^{2}\left(x,t\right)dx\\
	& = \int_{0}^{t}\int_{\Omega}2u\left(x,s\right)u_{t}\left(x,s\right)dsdx+\int_{\Omega}u_{0}^{2}\left(x\right)dx.
	\end{aligned}
	\end{equation}

	Then by Lemma \ref{inequalities} (i), we can see that
	\begin{equation}
	\begin{aligned}
	I_{p}''\left(t\right)&= \int_{\Omega}(u^{2}(x,t))_{t}dx\\
	&=2\int_{\Omega}\left[-\left|\nabla u\left(x,t\right)\right|^{p}+u(x,t)f(u(x,t))\right]dx.
	\end{aligned}
	\end{equation}

	By using the condition ($C_{p}$), Lemma \ref{eigenvalue_p}, and \eqref{J(t) estimate} in turn, we obtain that
	\begin{equation}\label{I''p}
	\begin{aligned}
	I_{p}''\left(t\right)&\geq-2\int_{\Omega}\left|\nabla u\left(x,t\right)\right|^{p}dx + 2\int_{\Omega}\left[\alpha F(u(x,t))-\beta u^{p}(x,t)-\alpha \gamma\right] dx\\
	&=2\alpha J_{p}(t) +\frac{2(\alpha-p)}{p}\int_{\Omega}\left|\nabla u\left(x,t\right)\right|^{p}dx - 2\beta \int_{\Omega}u^{p}(x,t)dx\\
	&\geq 2\alpha J_{p}(t) + 2\left[\frac{(\alpha-p)\lambda_{1, p}}{p} - \beta \right]\int_{\Omega}u\left(x,t\right)^{p}dx\\
	&\geq 2\alpha J_{p}(t)\\
	&\geq 2\alpha\left[J_{p}(0)+\int_{0}^{t}\int_{\Omega}u_{t}^{2}(x,s)dxds\right].
	\end{aligned}
	\end{equation}
	Applying the Schwarz inequality, as done in Theorem \ref{BlowB}, we obtain that
	\begin{equation}\label{I'p}
	\begin{aligned}
	I_{p}'\left(t\right) ^{2} & \leq  4\left(1+\delta\right)\left(\int_{\Omega}\int_{0}^{t}u^{2}\left(x,s\right)dsdx\right)\left(\int_{\Omega}\int_{0}^{t}u_{t}^{2}\left(x,s\right)dsdx\right)\\
	&+\left(1+\frac{1}{\delta}\right)\left[\int_{\Omega}u_{0}^{2}\left(x\right)dx\right]^{2},
	\end{aligned}
	\end{equation}
	where $\delta>0$ is arbitrary. Combining the above estimates \eqref{Ip}, \eqref{I''p}, and \eqref{I'p}, we obtain that
	\begin{equation}\label{I''I-(p)I''2}
	I_{p}''(t)I_{p}(t)-(1+\sigma)I_{p}'(t)^{2}>0,
	\end{equation}
	by choosing $\sigma=\delta=\sqrt{\frac{\alpha}{2}}-1>0$ and $M$ to be large enough. This means that the solutions $u$ blow up in finite time $T^{*}$.
	\begin{flushright}
	$\Box$
	\end{flushright}

\begin{Rmk}
	\begin{enumerate}
		\item[(i)] In a same way as in Remark \ref{blow-up time p=2}, the blow-up time $T^{*}$ can be estimated as follows
		\begin{equation*}\label{22}
		0<T^{*}\leq\frac{\frac{\alpha}{\alpha-2}\left(1+\sqrt{\frac{\alpha}{2}}\right)\left[{\int_{\Omega}u_{0}^{2}\left(x\right)dx}\right]^{2}}{2\alpha\sigma\left[-\frac{1}{p}\int_{\Omega}\left|\nabla u_{0}\left(x\right)\right|^{p}dx+\int_{\Omega}\left[F(u_{0}\left(x\right))-\gamma\right]dx\right]\int_{\Omega}u_{0}^{2}\left(x\right)dx}.
		\end{equation*}
		\item[(ii)] In fact, the above theorem and its proof can still works for the case $p=2$. 
	\end{enumerate}
	
\end{Rmk}

\section{Discussion on the Condition $(C_{p})$ with the initial data conditions}\label{section 3 conditions}

In this section, we compare the conditions $(A_{p})$ and $(C_{p})$ each other and discuss the role of the condition $J_{p}(0)>0$ for the initial data $u_{0}$.


As seen in the proof of Theorem \ref{BlowBp}, the concavity method is a tool for deriving the blow-up solution via the auxiliary function $J_{p}(t)$ under the condition $(A_{p})$ or $(C_{p})$, by imposing $J_{p}(0)>0$, instead of the large initial data.

On the other hand, instead of the condition $(B)$ in Section \ref{introduction}, it is not difficult to consider $(B_{p})$, in a similar form as in $(A_{p})$ or $(C_{p})$. In fact, to be strange, the condition $(B_{p})$ is not seen in any literature, as far as the authors know.

Then for $p\geq2$, let us recall the conditions as follows:

for some $\epsilon, \beta, \hbox{ and } \gamma>0$,
\[
\begin{aligned}
&\hbox{$(A_{p})$ $\hspace{1cm} (p+\epsilon) F(u)\leq uf(u)$},\\
&\hbox{$(B_{p})$ $\hspace{1cm} (p+\epsilon) F(u)\leq uf(u) + \gamma$},\\
&\hbox{$(C_{p})$ $\hspace{1cm} (p+\epsilon) F\left(u\right) \leq uf(u)+\beta u^{p}+\gamma$},
\end{aligned}
\]
for every $u>0$, where $0<\beta\leq\frac{\epsilon\lambda_{1, p}}{p}$ and $F(u):=\int_{0}^{u}f(s)ds$. Here, note that the constants $\epsilon, \beta, \hbox{ and } \gamma>0$ may be different in each case.\\

Then it is easy to see that $(A_{p})$ implies $(B_{p})$ and $(B_{p})$ implies $(C_{p})$, in turn. The difference between $(B_{p})$ and $(C_{p})$ is whether or not they depend on the domain. The condition $(B_{p})$ is independent of the first eigenvalue $\lambda_{1, p}$ which depends on the domain $\Omega$. However, the condition $(C_{p})$ depends on domain, due to the term $\beta u^{p}$ with $0<\beta\leq\frac{\epsilon\lambda_{1, p}}{p}$. From this point of view, the condition $(C_{p})$ can be understood as a refinement of $(B_{p})$, corresponding to the domain. On the contrary, if a function $f$ satisfies $(C_{p})$ for every bounded domain $\Omega$ with smooth boundary $\partial \Omega$, then the first eigenvalue $\lambda_{1, p}$ can be arbitrary small so that the condition $(C_{p})$ get closer to $(B_{p})$ arbitrarily. Besides, as far as the authors know, there has not been any noteworthy condition for the concavity method other than $(A_{p})$ or $(B_{p})$. \\

On the other hand, using the fact that $(C_{p})$ is equivalent to

\begin{equation*}\label{F(u)/u^2+m}
\frac{d}{du}\left(\frac{F(u)}{u^{p+\epsilon}}-\frac{\gamma}{p+\epsilon}\cdot\frac{1}{u^{p+\epsilon}}-\frac{\beta}{\epsilon}\cdot\frac{1}{u^{\epsilon}}\right)\geq0,\,\,u>0,
\end{equation*}

we can easily see that for every $u>0$,

\begin{equation}\label{conditionsss}
\begin{aligned}
&\hbox{$(A_{p})$ holds if and only if $F(u)={u^{p+\epsilon}}h_{1}(u)$},\\
&\hbox{$(B_{p})$ holds if and only if $F(u)={u^{p+\epsilon}}h_{2}(u)+b$},\\
&\hbox{$(C_{p})$ holds if and only if $F(u)={u^{p+\epsilon}}h_{3}(u)+au^{p}+b$},
\end{aligned}
\end{equation}
for some constants $\epsilon>0$, $a>0$, and $b>0$ with $0<a\leq\frac{\lambda_{1, p}}{p}$, where $h_{1}$, $h_{2}$, and $h_{3}$ are nondecreasing function on $(0,+\infty)$. Here also, the constants $\epsilon, a, \hbox{ and } b$ may be different in each case. We note here that the nondecreasing functions $h_{1}$ is nonnegative on $(0,+\infty)$, but  $h_{2}$ and $h_{3}$ may not be nonnegative, in general.


\begin{Lemma}\label{C condition f>=a u^1+e}
	Let $f$ be a function satisfying $(C_{p})$ and $f(u)\geq \lambda u^{p-1}$, $u>0$, where $\lambda > \lambda_{1, p}$. Then the condition $(C_{p})$ implies that there exists $m>0$ such that $h_{3}(u)>0$ for $u>m$. In this case, we can find $\mu>0$  such that $f(u)\geq \mu u^{p-1+\epsilon}$, $u\geq m$. Moreover, the conditions $(B_{p})$ and $(C_{p})$ are equivalent.
\end{Lemma}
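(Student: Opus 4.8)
The plan is to derive everything from one elementary observation: integrating the hypothesis $f(s)\geq\lambda s^{p-1}$ over $[0,u]$ gives
\[
F(u)=\int_{0}^{u}f(s)\,ds\ \geq\ \frac{\lambda}{p}\,u^{p},\qquad u>0,
\]
and since $\lambda>\lambda_{1,p}$ this lower bound has strictly larger leading coefficient than the $\tfrac{\lambda_{1,p}}{p}u^{p}$ that the coefficient $a$ in \eqref{conditionsss} is permitted to take. First I would invoke the representation $F(u)=u^{p+\epsilon}h_{3}(u)+au^{p}+b$ from \eqref{conditionsss}, with $h_{3}$ nondecreasing, $0<a\leq\tfrac{\lambda_{1,p}}{p}$ and $b>0$. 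Rearranging and inserting the bound above,
\[
u^{p+\epsilon}h_{3}(u)=F(u)-au^{p}-b\ \geq\ \Bigl(\tfrac{\lambda}{p}-a\Bigr)u^{p}-b,
\]
where $\tfrac{\lambda}{p}-a>0$. The right-hand side is positive once $u>m:=\bigl(b/(\tfrac{\lambda}{p}-a)\bigr)^{1/p}$, which yields $h_{3}(u)>0$ for $u>m$, the first assertion.

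For the second assertion I would fix any point $m_{1}>m$; monotonicity of $h_{3}$ gives $h_{3}(u)\geq h_{3}(m_{1})=:c_{0}>0$ for all $u\geq m_{1}$, hence $F(u)\geq c_{0}u^{p+\epsilon}$ there. Writing $(C_{p})$ as $uf(u)\geq(p+\epsilon)F(u)-\beta u^{p}-\gamma$ and substituting, $uf(u)\geq(p+\epsilon)c_{0}u^{p+\epsilon}-\beta u^{p}-\gamma$; since $p+\epsilon>p$, the first term dominates, so one may enlarge $m\geq m_{1}$ so that $\beta u^{p}+\gamma\leq\tfrac{(p+\epsilon)c_{0}}{2}u^{p+\epsilon}$ for $u\geq m$, giving $f(u)\geq\mu u^{p-1+\epsilon}$ with $\mu=\tfrac{(p+\epsilon)c_{0}}{2}$. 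Enlarging $m$ does not disturb the first assertion.

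For the equivalence of $(B_{p})$ and $(C_{p})$, the implication $(B_{p})\Rightarrow(C_{p})$ is already recorded in the text (adding $\beta u^{p}\geq0$ to the right-hand side only weakens the inequality), so I only need the converse under the standing hypothesis. Using $F(u)\geq\tfrac{\lambda}{p}u^{p}$, i.e. $u^{p}\leq\tfrac{p}{\lambda}F(u)$, directly in $(C_{p})$:
\[
(p+\epsilon)F(u)\ \leq\ uf(u)+\beta u^{p}+\gamma\ \leq\ uf(u)+\tfrac{\beta p}{\lambda}F(u)+\gamma,
\]
hence $\bigl(p+\epsilon-\tfrac{\beta p}{\lambda}\bigr)F(u)\leq uf(u)+\gamma$. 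Because $\beta\leq\tfrac{\epsilon\lambda_{1,p}}{p}$ and $\lambda>\lambda_{1,p}$ we get $\tfrac{\beta p}{\lambda}\leq\tfrac{\epsilon\lambda_{1,p}}{\lambda}<\epsilon$, so $\epsilon':=\epsilon-\tfrac{\beta p}{\lambda}>0$ and the last inequality is precisely $(B_{p})$ with parameters $\epsilon'$ and $\gamma$.

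I do not expect a genuinely hard step: the content is entirely the bound $F(u)\geq\tfrac{\lambda}{p}u^{p}$ together with the coefficient gap $\lambda>\lambda_{1,p}$. The only points requiring care are using the monotonicity of $h_{3}$ so that $c_{0}$ is a bona fide positive lower bound on a half-line, and reconciling the two a priori different thresholds $m$ in the first two assertions — harmless, since only existence of some $m$ is claimed and both assertions persist under enlarging it.
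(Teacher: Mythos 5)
Your proposal is correct and follows essentially the same route as the paper: both derive $F(u)\geq\frac{\lambda}{p}u^{p}$ from the hypothesis, use the representation $F(u)=u^{p+\epsilon}h_{3}(u)+au^{p}+b$ with $a\leq\frac{\lambda_{1,p}}{p}<\frac{\lambda}{p}$ to force $h_{3}>0$ beyond some threshold, feed the resulting lower bound $F(u)\gtrsim u^{p+\epsilon}$ back into $(C_{p})$ to get $f(u)\geq\mu u^{p-1+\epsilon}$, and absorb the term $\beta u^{p}$ into a fraction of $(p+\epsilon)F(u)$ (your $\beta u^{p}\leq\frac{\beta p}{\lambda}F(u)$ is just a rearrangement of the paper's split $\epsilon=\epsilon_{1}+\epsilon_{2}$) to recover $(B_{p})$. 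The only differences are cosmetic bookkeeping in how the lower-order terms are dominated for large $u$.
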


\begin{proof} First, it follows from \eqref{conditionsss} and the fact $\lambda > \lambda_{1, p}$ that $F(u)\geq \frac{\lambda}{p}u^{p}\geq \frac{\lambda_{1, p}}{p}u^{p}$ and so that
	\[
	u^{p+\epsilon}h_{3}(u)=F(u)-au^{p}-b\geq \frac{\lambda-\lambda_{1, p}}{p}u^{p}-b,
	\]	
	which goes to $+\infty$, as $u\rightarrow+\infty$. So, we can find $m>1$ such that $h_{3}(m)>0$, which implies that
	\[
	F(u)\geq u^{p+\epsilon}h_{3}(u),\,\, u\geq m.
	\]
	Putting it into the condition $(C_{p})$, we obtain
	\[
	u^{p+\epsilon}h_{3}(m)\leq uf(u) + \beta u^{p} + \gamma
	\]
	or
	\[
	u^{p-1+\epsilon}h_{3}(m)\leq f(u)  + \beta u+\frac{\gamma}{u}\leq (1 + \frac{\epsilon}{p})f(u) + \gamma,\,\,u\geq m>1,
	\]
	which gives
	\[
	f(u)\geq \mu u^{p-1+\epsilon},\,\,u\geq m>1
	\]
	for some $\mu>0$ and another constant $m$. \\
	
	Now, assume that the condition $(C_{p})$ is true. Since $0<\beta\leq\frac{\epsilon\lambda_{1, p}}{p}$ and $f(u)\geq \lambda u^{p-1}>\lambda_{1, p}u^{p-1}$, $u>0$, it follows from $(C_{p})$ that
	\[
	\epsilon_{1} F\left(u\right) + \left(p+\epsilon_{2}\right) F\left(u\right) \leq uf(u)+\frac{\epsilon\lambda_{1, p}}{p} u^{p}+\gamma,
	\]
	where $\epsilon_{1}=\frac{\epsilon\lambda_{1, p}}{\lambda}>0$ and $\epsilon_{2}=\epsilon-\epsilon_{1}>0$. This implies that for every $u>0$,
	\[
	\begin{aligned}
	uf(u)+\gamma &\geq \left(p+\epsilon_{2}\right) F\left(u\right) + \epsilon_{1} \int_{0}^{u}\left[f\left(s\right)-\lambda s^{p-1}\right]ds\\
	&\geq \left(p+\epsilon_{2}\right) F\left(u\right),
	\end{aligned}
	\]
	which gives $(B_{p})$.

\end{proof}

\begin{Rmk}\label{Zhao junning}
In general, the constant $\alpha$ with $\alpha>p$ in $(C_{p})$ can not be replaced by $p$. But, assume $p>2$ and $f$ satisfies a condition,
\[
\hbox{$(C_{p})'$ $\hspace{1cm} p F(u)\leq uf(u) + \gamma$, $u>0$,}
\]
which comes from $(C_{p})$ by replacing $\alpha$ by  $p$  and  taking $\beta=0$. Then the inequalities \eqref{I''p} and \eqref{I''I-(p)I''2} in the proof of Theorem \ref{BlowBp} can be derived in an easy way as follows:
	\begin{equation*}
	\begin{aligned}
	I_{p}''\left(t\right)&\geq-2\int_{\Omega}\left|\nabla u\left(x,t\right)\right|^{p}dx + 2\int_{\Omega}\left[p F(u(x,t))-p \gamma\right] dx\\
	&=2p J_{p}(t)\\ 
	\end{aligned}
	\end{equation*}
and 	
	\begin{equation*}
	I_{p}''(t)I_{p}(t)-(1+\sigma)I_{p}'(t)^{2}>0.
	\end{equation*}

Therefore, we can prove that the weak solutions to the equations \eqref{equation} for $p>2$ blows up in a finite time, under the conditions $(C_{p})'$ and $J_{p}(0)>0$, which can be understood as an improvement of the result by Zhao \cite{Z}.
\end{Rmk}

\begin{Rmk}
	It is well known that if $\int_{m}^{+\infty}\frac{ds}{f(s)}=+\infty$ for some $m>0$, the solutions to equations \eqref{equation} is global. On the contrary, it has not been clear yet whether or not the condition $\int_{m}^{+\infty}\frac{ds}{f(s)}<+\infty$ guarantees the blow-up solution. Instead, when $f(u)\geq \mu u^{(p-1)+\epsilon}$, $u\geq m$ for some $\epsilon>0$ and $m>0$, the solutions to the equations \eqref{equation} blow up in a finite time, only if the initial data $u_{0}$ is sufficiently large (for more details, see \cite{LX}).\\
\end{Rmk}

In general, the condition $(C_{p})$ may not guarantee the blow-up solutions for any initial data $u_{0}$. In fact, we can easily see that a function $f(u)=au^{p-1}$ $(p>2)$ satisfies $(C_{p})$ if and only if $a\leq \lambda_{1, p}$. However, for any function $u_{0}$,
\[
\begin{aligned}
J(0)&=-\frac{1}{p}\int_{\Omega}\left|\nabla u_{0}\left(x\right)\right|^{p}dx+\int_{\Omega}\left[\frac{a}{p}u_{0}^{p}(x)-\gamma\right]dx\\
& \leq  \frac{a-\lambda_{1, p}}{p}\int_{\Omega}|u_{0}\left(x\right)|^{p}dx-\gamma|\Omega|\,<\,0,
\end{aligned}
\]
which means that there is no initial data $u_{0}$ satisfying $J(0)>0$, when $f(u)=au^{p-1}$, $a\leq\lambda_{1, p}$. Of course, it is well known that the solutions to the equations \eqref{equation} is global, in this case (see \cite{LX}).

So,  we are here going to discuss when we can find initial data $u_{0}$ satisfies $J(0)>0$. Consider a domain $\Omega$ with $\lambda_{1, p}>\frac{p}{p-1}$ and a nonnegative continuous function $f$ satisfying the condition $(C_{p})$ with $\gamma=1$ for simplicity and $f(s)\geq p\lambda_{1, p} s^{p-1}$, $s>0$. Now, let us take $u_{0}(x):=\phi_{1, p}(x)$ where $\phi_{1, p}$ is an eigenfunction in Lemma \ref{eigenvalue_p} with $\int_{\Omega}[\phi_{1, p}(x)]^{p}dx=|\Omega|$. Then it follows that

\[
\begin{aligned}
J(0)&=-\frac{1}{p}\int_{\Omega}\left|\nabla \phi_{1, p}\left(x\right)\right|^{p}dx+\int_{\Omega}\int_{0}^{\phi_{1, p}(x)
}f(s)ds-|\Omega|\\
&\geq-\frac{\lambda_{1, p}}{p}\int_{\Omega}[\phi_{1, p}\left(x\right)]^{p}dx+\int_{\Omega}\int_{0}^{\phi_{1, p}(x)
}p\lambda_{1, p}s^{p-1}\, ds-|\Omega|\\
&=\lambda_{1, p}\left(1-\frac{1}{p}\right)\int_{\Omega}[\phi_{1, p}(x)]^{p}dx-|\Omega|\\&=\left[\lambda_{1, p}\left(1-\frac{1}{p}\right)-1\right]|\Omega|>0.
\end{aligned}
\]

\section*{Conflict of Interests}
\noindent The authors declare that there is no conflict of interests regarding the publication of this paper.

\section*{Acknowledgments}
\noindent This work was supported by Basic Science Research Program through the National Research Foundation of Korea(NRF) funded by the Ministry of Education (NRF-2015R1D1A1A01059561).

\end{document}